\documentclass[11pt]{article}
\usepackage{amsmath}
\usepackage{amssymb}
\usepackage{amsthm}
\textwidth=160truemm \textheight=235truemm \voffset-2.5truecm
\hoffset-1truecm \hfuzz15pt

\newtheorem{theorem}{Theorem}[section]
\newtheorem{lemma}[theorem]{Lemma}
\newtheorem{remark}[theorem]{Remark}
\newtheorem{corollary}[theorem]{Corollary}
\newtheorem{proposition}[theorem]{Proposition}
\newtheorem{problem}[theorem]{Problem}
\newtheorem{example}[theorem]{Example}

\numberwithin{equation}{section}

\long\def\symbolfootnote[#1]#2{\begingroup%
\def\thefootnote{\fnsymbol{footnote}}\footnote[#1]{#2}\endgroup}

\begin{document}

\def\C{{\mathbb C}}
\def\T{{\mathbb T}}
\def\N{{\mathbb N}}
\def\Z{{\mathbb Z}}
\def\R{{\mathbb R}}
\def\K{{\mathbb K}}
\def\CC{{\cal C}}
\def\H{{\cal H}}
\def\F{{\cal F}}
\def\X{{\cal X}}
\def\Y{{\cal Y}}
\def\epsilon{\varepsilon}
\def\kappa{\varkappa}
\def\phi{\varphi}
\def\leq{\leqslant}
\def\geq{\geqslant}
\def\re{\text{\tt Re}\,}
\def\ilim{\mathop{\hbox{$\underline{\hbox{\rm lim}}$}}\limits}
\def\dim{\hbox{\tt dim}\,}
\def\ker{\hbox{\tt ker}\,}
\def\supp{\hbox{\tt supp}\,}
\def\Re{\hbox{\tt Re}\,}
\def\ssub#1#2{#1_{{}_{{\scriptstyle #2}}}}

\title{Operators commuting with the Volterra operator are not weakly supercyclic}

\author{Stanislav Shkarin}

\date{}

\maketitle

\begin{abstract} \noindent We prove that any bounded linear operator
on $L_p[0,1]$ for $1\leq p<\infty$, commuting with the Volterra
operator $V$, is not weakly supercyclic, which answers affirmatively
a question raised by L\'eon-Saavedra and Piqueras-Lerena. It is
achieved by providing an algebraic flavored condition on an operator
which prevents it from being weakly supercyclic and is satisfied for
any operator commuting with $V$.
\end{abstract}

\small \noindent{\bf MSC:} \ \ 47A16, 37A25

\noindent{\bf Keywords:} \ \ Supercyclic operators, weakly
supercyclic operators, Volterra operator \normalsize

\section{Introduction \label{s1}}\rm

\itemsep=-2pt

All vector spaces are assumed to be over $\K$ being either the field
$\C$ of complex numbers or the field $\R$ of real numbers. As usual,
$\Z_+$ is the set of non-negative integers and $\N$ is the set of
positive integers. For a Banach space $X$, symbol $L(X)$ stands for
the space of bounded linear operators on $X$ and $X^*$ is the space
of continuous linear functionals on $X$. For $T,S\in L(X)$, we write
$[T,S]=TS-ST$ and by $\CC(T)$ we denote the {\it centralizer} of
$T$:
$$
\CC(T)=\{S\in L(X):[T,S]=0\}.
$$
We say that $Y$ is a {\it Banach space embedded into} a Banach space
$X$ if $Y$ is a linear subspace of $X$ endowed with its own norm,
which defines a topology on $Y$ stronger than the one inherited from
$X$ and turns $Y$ into a Banach space. For instance, $C[0,1]$ with
the sup-norm is a Banach space embedded into $L_1[0,1]$.

We say that $T\in L(X)$ is {\it supercyclic} (respectively, {\it
weakly supercyclic}) if there exists $x\in X$ such that the
projective orbit $O_{\rm pr}(x,T)=\{zT^nx:z\in\K,\ n\in \Z_+\}$ is
dense in $X$ (respectively, dense in $X$ with the weak topology). In
this case $x$ is said to be a {\it supercyclic vector}
(respectively, a {\it weakly supercyclic vector}) for $T$.
Supercyclicity was introduced by Hilden and Wallen \cite{hw} and
studied intensely since then. We refer to the survey \cite{msa} for
the details. Weak supercyclicity was introduced by Sanders
\cite{san} and studied in, for instance,
\cite{leon,leon2,ms1,pra,san1,58}. Gallardo and Montes \cite{gm},
answering a question raised by Salas, demonstrated that the Volterra
operator
\begin{equation}\label{volt}
Vf(x)=\int\limits_0^x f(t)\,dt,
\end{equation}
acting on $L_p[0,1]$ for $1\leq p<\infty$, is non-supercyclic. In
\cite{ms1,leon} it is shown that $V$ is not weakly supercyclic. In
\cite{55} it is proved that for $1<p<\infty$ and any non-zero $f\in
L_p[0,1]$, the sequence $V^nf/\|V^nf\|$ is weakly convergent to $0$
in $L_p[0,1]$. In \cite{mbs} and \cite{leon} it is demonstrated that
certain operators on $L_p[0,1]$, commuting with $V$, are not weakly
supercyclic. L\'eon and Piqueras \cite{leon} raised a question
whether any bounded operator on $L_p[0,1]$ for $1\leq p<\infty$,
commuting with $V$, is not weakly supercyclic. In the present
article we close the issue by answering this question affirmatively.

\begin{theorem}\label{main} Let $1\leq p<\infty$ and $T\in L(L_p[0,1])$
be such that $TV=VT$. Then $T$ is not weakly supercyclic.
\end{theorem}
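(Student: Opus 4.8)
The plan is to pass, via the classical structure of $\CC(V)$, to a clean normal form, and then to defeat weak supercyclicity by an asymptotic analysis of the orbit near the origin, tested against the functionals $f\mapsto\int_0^a f$.

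\emph{Structural reductions.} Every $T\in\CC(V)$ on $L_p[0,1]$ is a convolution operator $Tf=cf+\mu\star f$ with $c\in\K$, $\mu$ a finite regular Borel measure on $(0,1]$ and $(\mu\star f)(x)=\int_{[0,x]}f(x-s)\,d\mu(s)$; equivalently $T$ lies in the strong operator closure of the polynomials in $V$, and $\sigma(T)=\{c\}$. In particular $T$ leaves invariant, and is bounded on, each $V^k(L_p[0,1])$, which is a Banach space embedded (in the sense of the Introduction) into $L_p[0,1]$ via $\|g\|:=\|V^{-k}g\|_p$; moreover $V^k$ conjugates $T$ on $L_p[0,1]$ to $T|_{V^k(L_p[0,1])}$. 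Since $V$ is injective, has dense range, commutes with $T$, and is weak-to-weak continuous, $g$ being a weakly supercyclic vector implies $V^kg$ is one as well. We may therefore assume that the weakly supercyclic vector $g$ lies in $V^N(L_p[0,1])$ for a large $N$ fixed in advance, so that $g\in C^{N-1}[0,1]$ and $g$ vanishes at $0$ to order $\ge N$. (When $p=1$, the embedded Banach space $C[0,1]\hookrightarrow L_1[0,1]$ enters in the same place; for $1<p<\infty$ everything is carried out in the reflexive space $L_p[0,1]$.)

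\emph{The algebraic input and the test functionals.} For $a\in(0,1]$ let $\lambda_a\in X^*=L_q[0,1]$ be $\lambda_a(f)=\int_0^a f=(Vf)(a)$. The algebraic input we use is simply that each $V^k(L_p[0,1])$ is $T$-invariant and $T$ commutes with $V$, so that $\lambda_a(T^kg)=(VT^kg)(a)=(T^kVg)(a)=(T^ku)(a)$ with $u:=Vg$; thus the scalar array $(\lambda_a(T^kg))_{a,k}$ is nothing but the orbit of the fixed function $u\in C[0,1]$ under $T|_{C[0,1]}$, sampled at the points $a$. If $g$ were weakly supercyclic, then, since every $\lambda_a$ lies in $X^*$, weak density of $\{zT^kg:z\in\K,\ k\in\Z_+\}$ would force, for each finite set $0<a_1<\dots<a_r\le1$, the projective points $[(T^ku)(a_1):\dots:(T^ku)(a_r)]\in\mathbb P^{r-1}(\K)$ to be dense in $\mathbb P^{r-1}(\K)$ as $k$ ranges over $\Z_+$.

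\emph{Defeating this --- the main obstacle.} The claim is that this is impossible because, for $u=Vg$ (smooth, with a high-order zero at $0$), the direction of the vector $((T^ku)(a_1),\dots,(T^ku)(a_r))$ converges as $k\to\infty$. Expanding $(T^ku)(a)=\sum_{j\ge0}\binom{k}{j}c^{k-j}(\mu^{\star j}\star u)(a)$ and analysing it by a saddle-point/large-deviations method, one finds that the logarithmic growth rate $\lim_k\frac1{\sqrt k}\log|c^{-k}(T^ku)(a)|$ (and, more generally, the full asymptotic profile of $c^{-k}(T^ku)(a)$) is a strictly increasing function of $a$; already for $c=0$, e.g. $T=V$, the elementary estimate $(V^ku)(a_1)\le(a_1/a_2)^{k-1}(V^ku)(a_2)$ for $0<a_1<a_2$ makes this transparent. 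Hence the projective orbit above converges to a single point of $\mathbb P^{r-1}(\K)$ (or to a finite subset of it, when several dominant critical points of the symbol tie), and so cannot be dense once $r\ge2$ --- a contradiction. Making this asymptotic analysis rigorous and uniform in $k$ --- for an arbitrary finite measure $\mu$ on $(0,1]$, across the regimes $|c|<1$, $|c|=1$, $|c|>1$, and controlling the influence of the possibly oscillatory factor $u$ near the dominant saddle --- is the crux of the matter; it is precisely at this step that one must cope with the free scalars $z$, which is why the elementary disjointness arguments that rule out ordinary (norm) supercyclicity of $V$ break down for weak supercyclicity, and it is here that the high-order vanishing of $u$ at $0$, arranged in the first reduction, does the essential work.
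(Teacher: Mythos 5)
Your overall strategy --- reduce to a normal form for $\CC(V)$, project the projective orbit onto finitely many point-evaluation functionals $\lambda_a(f)=\int_0^af$, and show the resulting set of directions in $\mathbb P^{r-1}(\K)$ is not dense --- is legitimate in principle (non-density of a single finite-dimensional projection does refute weak density), but the proof has two genuine gaps, and the second is fatal as written. First, the structural claim that every $T\in\CC(V)$ has the form $Tf=cf+\mu\star f$ with $\mu$ a finite measure is asserted without proof and is not correct: the commutant of $V$ is the strong-operator closure of the polynomials in $V$ and (already on $L_2$, by Sarason's description $Tf=(k\star f)'$) contains operators not given by convolution with a measure. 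So even if the rest worked, it would only cover a proper subclass of $\CC(V)$. Second, and more seriously, the entire argument rests on the claim that the direction of $\bigl((T^ku)(a_1),\dots,(T^ku)(a_r)\bigr)$ converges (or accumulates on a finite set) as $k\to\infty$, and you explicitly defer the proof of this, calling it ``the crux of the matter.'' That crux is not a technicality: even in the model case $T=V$, the inequality $(V^ku)(a_1)\leq(a_1/a_2)^{k-1}(V^ku)(a_2)$ you invoke requires $u\geq 0$, whereas a weakly supercyclic vector is an arbitrary (sign-changing, oscillating) function; cancellation in $\int_0^{a_2}(a_2-t)^{k-1}u(t)\,dt$ can destroy the ratio estimate, and no amount of pre-smoothing by $V^N$ controls the oscillation of $u$ near $0$. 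Since the convergence of directions is exactly the statement that would make the theorem elementary, leaving it unproved leaves the proof with no content at the decisive step.

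For contrast, the paper's route avoids any asymptotic analysis of the orbit. It uses the commutator identity $T^nM-MT^n=nST^{n-1}$ with $M$ the multiplication by the independent variable (so that $[V,M]=-V^2\in\CC(V)$), Erd\"os's theorem that $\CC(V)$ is commutative, and the convolution-algebra structure of $L_p[0,1]$ to manufacture, for every cyclic vector, a single functional $g$ with $|g(T^nx)|=O(n^{-1}\|T^nx\|)$. This ``angle decay'' against one functional immediately kills norm supercyclicity, but --- precisely because one functional relative to the norm says nothing about finite-dimensional projections --- it does not by itself kill weak supercyclicity; the paper upgrades it by factoring the orbit through the Hilbert space $W^{1,2}_0[0,1]$ via $R=V^2$ and invoking the weakly-closed-sequence property (Ball's plank theorem). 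Your proposal tries to bypass that machinery with a finite-dimensional obstruction; if such an obstruction existed it would indeed be a simpler proof, but establishing it is an open analytic problem rather than a routine saddle-point computation.
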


Our approach has nothing in common with the ones from \cite{mbs} or
\cite{leon}. In \cite{leon} it is shown that positive operators
commuting with $V$ are not weakly supercyclic and, naturally, the
proof employs positivity argument. In \cite{mbs} it is shown that
convolution operators on $L_p[0,1]$ are not weakly supercyclic
provided the convolution kernel has a nice asymptotic behavior at
zero and the proof is based upon upper and lower estimates of the
orbits. We, on the other hand, find an 'algebraic' condition on an
operator preventing it from being weakly supercyclic and demonstrate
that operators, commuting with $V$, satisfy this condition. In order
to formulate it in full generality we need to introduce the
following class of Banach spaces.

Let $\Y$ be the class of Banach spaces $X$ such that for any
sequence $\{x_n\}_{n\in\Z_+}$ in $X$ satisfying $n=O(\|x_n\|)$ as
$n\to\infty$, the set $\{x_n:n\in\Z_+\}$ is closed in the weak
topology (=weakly closed).

\begin{remark}\label{rem}\rm It immediately follows that if
$X\in\Y$ and $A\subseteq\Z_+$ is infinite, then any subset of $X$ of
the shape $\{x_n:n\in A\}$ is weakly closed provided $n=O(\|x_n\|)$
as $n\to\infty$, $n\in A$. It is also clear that a (closed linear)
subspace of a Banach space from $\Y$ also belongs to $\Y$.
\end{remark}

For sake of better understanding of the concept, we will show in the
Appendix that the class $\Y$ contains all Banach spaces of
non-trivial type. However in order to prove Theorem~\ref{main} we
just need to know that Hilbert spaces belong to $\Y$. The latter
follows from the next lemma, which appears as Proposition~5.2 in
\cite{58}.

\begin{lemma}\label{111} Let $\{x_n\}_{n\in\Z_+}$ be a sequence in a
Hilbert space $\H$ such that
$\sum\limits_{n=1}^\infty\|x_n\|^{-2}<\infty$. Then
$\{x_n:n\in\Z_+\}$ is weakly closed in $\H$. In particular, any
Hilbert space belongs to $\Y$.
\end{lemma}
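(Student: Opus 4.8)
The plan is to pass from weak closedness of $S=\{x_n:n\in\Z_+\}$ to a statement about separating functionals, and then prove that statement constructively, with the hypothesis $\sum\|x_n\|^{-2}<\infty$ entering in the guise ``the prescribed values $\|x_n\|^{-1}$ are square summable''.

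\textbf{Reduction.} From $\sum_{n\ge1}\|x_n\|^{-2}<\infty$ one gets $\|x_n\|\to\infty$, so every ball contains only finitely many $x_n$. A subset of a Banach space is weakly closed iff every point outside it has a weak neighbourhood meeting it in a finite set (such a neighbourhood can be shrunk to miss the set, the weak topology being Hausdorff). Since weak neighbourhoods of $y$ are of the form $\{u:|\langle u-y,w_j\rangle|<\epsilon,\ 1\le j\le k\}$, it suffices to prove: for each $y\in\H$ there are $w_1,\dots,w_k\in\H$ and $\epsilon>0$ with $\max_j|\langle x_n-y,w_j\rangle|\ge\epsilon$ for all but finitely many $n$. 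This is trivial if $y\in S$, so assume $y\notin S$ and set $z_n=x_n-y$; then $z_n\ne0$, $\|z_n\|\to\infty$ and $\sum\|z_n\|^{-2}<\infty$. Replacing $\H$ by the separable closed subspace $\overline{\mathrm{span}}\{z_n\}$ (which inherits both the inner product and the weak topology), we are reduced to: \emph{if $(z_n)_{n\ge1}$ are nonzero vectors of a separable Hilbert space with $\sum_n\|z_n\|^{-2}<\infty$, there are $w_1,\dots,w_k$ and $\epsilon>0$ with $\max_j|\langle z_n,w_j\rangle|\ge\epsilon$ for all but finitely many $n$.}

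\textbf{The soft part.} Put $u_n=z_n/\|z_n\|$. As $\H$ is separable and reflexive, its closed unit ball is weakly compact and weakly metrizable, so the set $K$ of weak limit points of $(u_n)$ is nonempty and weakly compact. If $0\notin K$, choose $w_1,\dots,w_k$ and $\delta>0$ with $\max_j|\langle u,w_j\rangle|\ge\delta$ for all $u\in K$; then $\max_j|\langle u_n,w_j\rangle|\ge\delta/2$ for all large $n$ (otherwise a subsequence of $(u_n)$ would converge weakly to a point of $K$ on which all $w_j$ are small), whence $\max_j|\langle z_n,w_j\rangle|=\|z_n\|\max_j|\langle u_n,w_j\rangle|\to\infty$ and we are done. (This used only $\|z_n\|\to\infty$.) More generally, separating the weakly compact set $K\setminus\{u:\|u\|<\rho\}$ from $0$ by finitely many functionals disposes of every index $n$ for which $u_n$ does not eventually enter a prescribed weak neighbourhood of $0$; so, up to finitely many functionals and passing to a subsequence, we may assume $(u_n)\to0$ weakly.

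\textbf{The hard part (main obstacle).} For a weakly null sequence of unit vectors one can, by a gliding-hump argument, pass to a near-orthonormal subsequence $(u_n)_{n\in A}$ with Gram matrix $I+E$, $\|E\|<\tfrac12$; since $\beta:=(\|z_n\|^{-1})_{n\in A}\in\ell^2$, the system $(I+E)c=\beta$ has a solution $c\in\ell^2$, the vector $w=\sum_{n\in A}c_nu_n$ lies in $\H$, and $\langle z_n,w\rangle=1$ for every $n\in A$. The real work -- and the step I expect to be hardest -- is to make this catch \emph{all} large $n$ at once with \emph{finitely} many functionals, not just along one subsequence. I would attempt an inductive construction: process the $z_n$ ordered by norm, keep a finite set of ``treated'' indices and a functional $w$ with $|\langle z_m,w\rangle|\ge2$ for treated $m$, and when an untreated $z_n$ appears add to $w$ a correction along the component $g_n$ of $z_n$ orthogonal to $\mathrm{span}\{z_m:m\ \text{treated}\}$, of norm $\approx2\|g_n\|^{-1}$; successive corrections are mutually orthogonal, so the accumulated $w$ stays in $\H$ exactly when $\sum 2^2\|g_n\|^{-2}$ over corrected indices is finite. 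The delicate point is to order the treatment and choose the margins so that (i) an index whose orthogonal component $g_n$ has become too small is already caught by the current $w$ (or lies among finitely many exceptions), and (ii) for every correction actually made $\|g_n\|^{-1}\lesssim\|z_n\|^{-1}$, so that square summability of $(\|z_n\|^{-1})$ is precisely what keeps $w$ inside $\H$. Matching this bookkeeping to $\sum\|z_n\|^{-2}<\infty$ is the crux of the proof.

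\textbf{Conclusion of ``$\H\in\Y$''.} Given $(x_n)_{n\in\Z_+}$ with $n=O(\|x_n\|)$, there are $c>0$ and $n_0$ with $\|x_n\|\ge cn$ for $n\ge n_0$; then $x_n\ne0$ there and $\sum_{n\ge n_0}\|x_n\|^{-2}\le c^{-2}\sum n^{-2}<\infty$, so the claim applied to the tail $(x_n)_{n\ge n_0}$ shows $\{x_n:n\ge n_0\}$ is weakly closed, and adjoining the finitely many points $x_0,\dots,x_{n_0-1}$ leaves it weakly closed.
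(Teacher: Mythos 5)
Your reduction and ``soft part'' are essentially fine, but the proof stops exactly where you say you expect it to be hardest: the ``hard part'' is a plan, not an argument. Neither of your delicate points (i) and (ii) is established, and (ii) is simply not true in general --- there is no reason why the component $g_n$ of $z_n$ orthogonal to the span of the previously treated vectors should satisfy $\|g_n\|^{-1}\lesssim\|z_n\|^{-1}$: the normalized vectors $z_n/\|z_n\|$ can accumulate along the treated directions so that $\|g_n\|/\|z_n\|\to 0$ as fast as one likes, and then the accumulated correction leaves $\H$. The gliding-hump step only yields a near-orthonormal \emph{subsequence}, and upgrading ``a subsequence'' to ``all but finitely many $n$, with finitely many functionals'' is precisely the content of the lemma. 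Note also that the weak topology is not first countable on unbounded sets, so ``passing to a subsequence along which $u_n\to 0$ weakly'' does not by itself reduce the problem: in the end you must produce finitely many functionals bounded below on \emph{all} the $z_n$ simultaneously.

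The missing step is not bookkeeping; it is where the depth of the statement lives. The paper itself does not prove this lemma but quotes it as Proposition~5.2 of \cite{58}, where it is deduced from Ball's solution of the complex plank problem \cite{ball}: if $u_n$ are unit vectors in a complex Hilbert space and $t_n\geq 0$ satisfy $\sum t_n^2\leq 1$, then there is a \emph{single} unit vector $w$ with $|\langle w,u_n\rangle|\geq t_n$ for all $n$. Applied with $u_n=x_n/\|x_n\|$ and $t_n=c\|x_n\|^{-1}$, this produces in one stroke the functional your induction is trying to assemble by hand (the real case is handled via complexification). The exponent $2$ is sharp: for $x_n=a_ne_n$ along an orthonormal basis with $a_n\to\infty$ and $\sum a_n^{-2}=\infty$, any finite family $w_1,\dots,w_k$ satisfies $\sum_n\max_j|\langle e_n,w_j\rangle|^2\leq\sum_j\|w_j\|^2<\infty$, so $0$ lies in the weak closure of $\{x_n\}$. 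Consistently with this, the elementary Gaussian argument in the paper's appendix (Lemma~\ref{wc1}) only reaches summability exponents $q<2$. So an elementary scheme of the kind you sketch is very unlikely to close the gap at the endpoint $q=2$; to complete the proof you should invoke Ball's plank theorem rather than the inductive orthogonal-correction construction.
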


It is worth noting that the above lemma is proved in \cite{58} by
applying Ball's solution \cite{ball} of the complex plank problem.

\begin{theorem}\label{cor1} Let $X$ be a Banach space and $T\in L(X)$
be such that

\begin{itemize}
\item[\rm(\ref{cor1}.1)]there exists $M\in L(X)$ satisfying
$[T,[T,M]]=0$ and such that for any cyclic vector $u$ for $T$ and
any $v\in X$, there are $B,C\in\CC(T)$ for which $C[T,M]\neq 0$ and
$Cv=Bu.$
\end{itemize}
Then $T$ is not supercyclic. If additionally
\begin{itemize}
\item[\rm(\ref{cor1}.2)] there is $R\in \CC(T)$ with dense range,
taking values in a Banach space $Y\in \Y$, embedded into $X$,
\end{itemize}
then $T$ is not weakly supercyclic.
\end{theorem}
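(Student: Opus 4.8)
The plan is to turn the bracket identity $[T,[T,M]]=0$ into a quantitative statement about orbits. Write $N=[T,M]$; then $N\in\CC(T)$, and by induction on $n$ one gets $T^{n}M-MT^{n}=nT^{n-1}N$, i.e. $T^{n}Mu=MT^{n}u+nT^{n-1}Nu$ for every $u\in X$ and $n\ge1$. I will first show $T$ is not supercyclic and then upgrade the argument to the weak topology using (\ref{cor1}.2).

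Assume $x\in X$ is supercyclic for $T$; then $x$ is cyclic, so I apply (\ref{cor1}.1) with $u=x$ and $v=Mx$ to get $B,C\in\CC(T)$ with $CN\neq0$ and $CMx=Bx$. Put $P:=B-CM\in L(X)$ and $c:=CNx$. Then $Px=Bx-CMx=0$; also $c\neq0$, because $CN\in\CC(T)\setminus\{0\}$ and $x$ is cyclic (if $CNx=0$ then $CN$ vanishes on the dense span of $\{T^{k}x\}$, so $CN=0$). Applying $C$ to $T^{n}Mx=MT^{n}x+nT^{n-1}Nx$ and using $CT^{n}=T^{n}C$ together with $CMx=Bx$ gives $BT^{n}x=CMT^{n}x+nT^{n-1}c$, i.e.
$$
PT^{n}x=nT^{n-1}c\quad(n\ge1),\qquad\text{so in particular}\quad PTx=c .
$$
Now invoke supercyclicity. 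After the routine reductions are made separately (finite-dimensional $X$, vanishing of some $T^{N}x$, and the case in which $x$ is approximable only through finitely many of the directions $T^{n}x$, which passes the problem to a proper closed $T$-invariant subspace), one obtains scalars $z_{k}$ and indices $n_{k}\to\infty$ with $z_{k}T^{n_{k}}x\to x$ in norm. Applying the bounded operators $P$ and $PT$ to this,
$$
z_{k}n_{k}T^{n_{k}-1}c\to Px=0,\qquad z_{k}(n_{k}+1)T^{n_{k}}c\to PTx=c\neq0 .
$$
Passing to norms, $|z_{k}|n_{k}\|T^{n_{k}}c\|\to\|c\|>0$ (since $n_{k}/(n_{k}+1)\to1$) while $|z_{k}|n_{k}\|T^{n_{k}-1}c\|\to0$, so $\|T^{n_{k}}c\|/\|T^{n_{k}-1}c\|\to\infty$ (the denominators are eventually nonzero, otherwise $z_{k}(n_{k}+1)T^{n_{k}}c=0\not\to c$), contradicting $\|T^{n_{k}}c\|\le\|T\|\,\|T^{n_{k}-1}c\|$. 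Hence $T$ is not supercyclic.

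For the weak statement, assume $x$ is weakly supercyclic, and let $R$ be as in (\ref{cor1}.2). Since $R$ is weak--weak continuous with dense range, $O_{\rm pr}(Rx,T)=R\bigl(O_{\rm pr}(x,T)\bigr)$ is weakly dense in $\overline{R(X)}=X$, so $Rx$ is weakly supercyclic, hence cyclic, and its orbit lies in $Y$ because $T^{n}Rx=RT^{n}x\in\operatorname{ran}R\subseteq Y$. Running the construction above with $u=Rx$, $v=MRx$ yields $P\in L(X)$ and $c'=CN(Rx)\neq0$ with $P(Rx)=0$ and $PT^{n}(Rx)=nT^{n-1}c'$. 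Here the factor $n$ is the crucial ingredient: unless $\liminf_{m}\|T^{m}c'\|=0$, the bound $n\|T^{n-1}c'\|=\|PT^{n}Rx\|\le\|P\|\,\|T^{n}Rx\|$ forces $\|T^{n}Rx\|$ to grow at least linearly in $n$; as the norm of $Y$ dominates that of $X$, this gives $n=O(\|T^{n}Rx\|_{Y})$, whence by the defining property of $\Y$ the set $\{T^{n}Rx:n\in\Z_{+}\}$ is weakly closed. One then reaches a contradiction by playing this weak closedness against the weak density of the projective orbit $O_{\rm pr}(Rx,T)$; the residual case $\liminf_{m}\|T^{m}c'\|=0$ is handled separately on the closed $T$-invariant subspace $\overline{\operatorname{ran}CN}$, which carries $c'$ as a weakly supercyclic vector. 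If $X$ is a Hilbert space, Lemma~\ref{111} may be used in place of $\Y$.

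The step I expect to be the real obstacle is exactly this last one: transplanting the norm-growth comparison into the weak topology. In the norm setting one merely divides two convergent sequences of norms; weakly, convergence gives no control of norms whatsoever, and it is precisely the rigidity built into the class $\Y$ (ultimately the plank inequality, through Lemma~\ref{111}) that must be brought in to recover a contradiction --- together with the bookkeeping needed for the degenerate configurations and for comparing the weak topology of $Y$ with that of $X$.
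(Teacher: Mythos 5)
Your argument for the non-supercyclicity half is correct. It rests on the same commutator identity $T^nM-MT^n=nT^{n-1}[T,M]$ as the paper, and your way of closing it (applying $P=B-CM$ and $PT$ to $z_kT^{n_k}x\to x$ and comparing $\|T^{n_k}c\|$ with $\|T^{n_k-1}c\|$) is a legitimate variant of the paper's route, which instead converts the same identity into the functional estimate $|g(T^nu)|=O(n^{-1}\|T^nu\|)$ for a nonzero $g=(C[T,M])^*h$ and feeds it into an Angle-Criterion-type lemma. The reductions you defer are indeed routine, since $\{zT^nx:z\in\K,\ n\leq N\}$ is a nowhere dense finite union of lines whenever $\dim X\geq 2$, and $(\ref{cor1}.1)$ forces $\dim X\geq 2$.

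The weak half has a genuine gap, and it sits exactly where you predicted. Two problems. First, your dichotomy on $\liminf_m\|T^mc'\|$ leaves the case $\liminf=0$ unresolved: knowing that $c'$ is weakly supercyclic for $T$ on some closed invariant subspace yields no contradiction with $\liminf_m\|T^mc'\|=0$ (weakly supercyclic, even hypercyclic, vectors routinely have orbits returning near $0$), and restarting the argument there produces a new vector $c''$ and the same dichotomy, with no termination. Second, even in the favourable case, weak closedness of $\{T^nRx:n\in\Z_+\}$ does not contradict weak density of the projective orbit $\{zT^nRx:z\in\K,\ n\in\Z_+\}$: the weak closure of a countable union of lines is not controlled by the weak closure of a single selection from those lines. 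The missing idea --- and the reason the paper works with functionals rather than with the operator $P$ --- is renormalization. From $PT^{n+1}Rx=(n+1)\,C[T,M]T^nRx$ one extracts a nonzero functional $g=(C[T,M])^*h$ with $|g(T^nRx)|=O\bigl(n^{-1}\|T^nRx\|_Y\bigr)$. If $y$ with $g(y)\neq0$ lies in the weak closure of the projective orbit, then by Lemma~\ref{le2} it already lies in the weak closure of the selection $\bigl\{T^nRx/g(T^nRx)\bigr\}$, and it is these renormalized vectors whose $Y$-norms satisfy $\geq n/{\rm const}$ \emph{regardless} of how $\|T^nRx\|_Y$ behaves; the defining property of $\Y$ then makes that selection weakly closed and Lemma~\ref{le3} finishes. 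Your version needs the unrenormalized orbit to grow linearly, which is both unavailable in general and insufficient: linear growth of $\|x_n\|$ does not survive the renormalization $x_n\mapsto x_n/f(x_n)$, since for a generic $f$ all one gets is $\|x_n/f(x_n)\|\geq 1/\|f\|$.
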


In the case $X\in \Y$, condition (\ref{cor1}.2) is automatically
satisfied with $R=I$. Thus, we have the following corollary.

\begin{corollary}\label{cor3} Let $X\in\Y$ and $T\in
L(X)$. If $(\ref{cor1}.1)$ is satisfied, then $T$ is not weakly
supercyclic.
\end{corollary}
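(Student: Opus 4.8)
The plan is to obtain Corollary~\ref{cor3} as an immediate special case of Theorem~\ref{cor1}, by feeding that theorem the trivial choice $R=I$. Since condition~(\ref{cor1}.1) is already assumed in the corollary, the only thing that needs checking is that, when $X\in\Y$, condition~(\ref{cor1}.2) is automatically fulfilled.

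To that end I would take $Y=X$, equipped with its given norm, and $R=I$ the identity operator on $X$. Then $I\in\CC(T)$ since the identity commutes with every bounded operator; $I$ has dense range because its range is all of $X$; and $X$ with its own norm counts as a Banach space embedded into $X$ in the sense of the Introduction, since $X$ is trivially a linear subspace of itself whose norm topology coincides with---hence is stronger than---the topology inherited from $X$, and it is a Banach space. Finally $X\in\Y$ by hypothesis. Thus all the requirements of~(\ref{cor1}.2) are met.

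Applying the second part of Theorem~\ref{cor1} with this choice of $R$ then yields that $T$ is not weakly supercyclic, which is precisely the conclusion of Corollary~\ref{cor3}. There is essentially no obstacle in this argument; the single point worth a word of comment is that a Banach space is to be regarded as embedded into itself (``stronger'' being understood in the non-strict sense), exactly in the spirit of the example $C[0,1]\hookrightarrow L_1[0,1]$ recorded in the Introduction, where one may equally well take $X=Y=L_1[0,1]$.
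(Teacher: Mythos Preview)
Your argument is correct and is exactly the one given in the paper: the sentence immediately preceding Corollary~\ref{cor3} observes that when $X\in\Y$, condition~(\ref{cor1}.2) is automatically satisfied with $R=I$, and the corollary then follows from Theorem~\ref{cor1}.
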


We would like to formulate a useful application of
Theorem~\ref{cor1}, dealing with operators on commutative Banach
algebras. Note that we do not require a Banach algebra to be unital.
For an element $a$ of a commutative Banach algebra $X$, symbol $M_a$
stands for the {\it multiplication} operator $M_a\in L(X)$, $M_a
b=ab$. We say that a commutative Banach algebra $X$ is {\it
non-degenerate} if $M_a=0$ implies $a=0$. Equivalently, $X$ is
non-degenerate if the intersection of kernels of all multiplication
operators is $\{0\}$.

\begin{theorem}\label{cor4} Let $X$ be a non-degenerate commutative Banach
algebra, $\Lambda$ be a commutative subalgebra of $L(X)$ containing
$\{M_a:a\in X\}$ and $M\in L(X)$ be such that $[M,S]\in\Lambda$ for
each $S\in\Lambda$. Then any $T\in\Lambda$ satisfying $[T,M]\neq 0$
is not supercyclic. If additionally, there is $R\in\Lambda$ with
dense range, taking values in a Banach space $Y\in \Y$, embedded
into $X$, then any $T\in\Lambda$ satisfying $[T,M]\neq 0$ is not
weakly supercyclic.
\end{theorem}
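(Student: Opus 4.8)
\medskip

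The plan is to deduce Theorem~\ref{cor4} from Theorem~\ref{cor1} by checking that the hypotheses (\ref{cor1}.1) and, when applicable, (\ref{cor1}.2) hold for any $T\in\Lambda$ with $[T,M]\neq0$. The nondegeneracy hypothesis and the inclusion $\{M_a:a\in X\}\subseteq\Lambda$ are exactly what is needed to translate the multiplicative structure of $X$ into the operator-theoretic condition on centralizers.

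First I would verify the Jacobi-type bracket condition $[T,[T,M]]=0$. Since $T\in\Lambda$ and $[M,S]\in\Lambda$ for every $S\in\Lambda$, we get $[T,M]=-[M,T]\in\Lambda$; as $\Lambda$ is a commutative subalgebra of $L(X)$ and $T\in\Lambda$, the two elements $T$ and $[T,M]$ of $\Lambda$ commute, i.e. $[T,[T,M]]=0$. Next, since $T\in\Lambda$ and $\Lambda$ is commutative, every element of $\Lambda$ — in particular every $M_a$ and the operators $[T,M]$, $T$ itself — lies in $\CC(T)$; thus $\Lambda\subseteq\CC(T)$.

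Now I would establish the key clause of (\ref{cor1}.1): given a cyclic vector $u$ for $T$ and an arbitrary $v\in X$, produce $B,C\in\CC(T)$ with $C[T,M]\neq0$ and $Cv=Bu$. Here is where nondegeneracy enters. Since $[T,M]\neq0$ as an operator, there is $w\in X$ with $[T,M]w\neq0$; by nondegeneracy pick $a\in X$ with $M_a[T,M]w\neq0$, so $C:=M_a$ satisfies $C[T,M]\neq0$ — but I must also arrange $Cv=Bu$ with $B\in\CC(T)$, so I cannot fix $C$ so cavalierly. The right move is: set $C=M_v$ (multiplication by the target vector $v$), and set $B=M_{?}$; then $Cv=M_v v=v^2$ and $Bu=M_b u=bu$, and I would need $bu=v^2$ — not generally solvable. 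So instead I take $C=M_c$ and $B=M_b$ and seek $cv=bu$ in $X$; choosing $c,b$ from an approximate identity or from the algebra so that $cv=bu$ holds requires care. The clean path: since $u$ is cyclic for $T$ and $T\in\Lambda$ contains all $M_a$, for each $\epsilon>0$ there is a polynomial-type combination of $T^n u$ approximating $v$; but (\ref{cor1}.1) asks for exact equality, so I should exploit that $\{M_a u:a\in X\}$ together with the $T$-action is rich. The honest resolution is to take $C=M_{a}$ with $a$ chosen so that $C[T,M]\neq0$ (possible by nondegeneracy applied to the nonzero operator $[T,M]$), and then solve $M_a v=Bu$ for $B\in\CC(T)$: writing $M_a v=av\in X$ and using cyclicity of $u$ plus the fact that multiplication operators lie in $\Lambda\subseteq\CC(T)$, approximate $av$ by $M_{b_k}u=b_k u$; passing to the limit is not exact, so the correct statement must instead use that $C,B$ range over all of $\CC(T)$ and exploit density differently — I would set $B$ to be a limit in the strong operator topology of operators $M_{b_k}T^{j_k}$, which stays in the closed algebra $\CC(T)$, giving $Bu=av=Cv$ exactly while $C[T,M]=M_a[T,M]\neq0$.

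The main obstacle is precisely this last point: turning the approximate solvability of $Cv=Bu$ (coming from cyclicity of $u$) into the exact identity demanded by (\ref{cor1}.1), while simultaneously keeping $C[T,M]\neq0$. I expect the argument to go through because $\CC(T)$ is closed in the strong operator topology, $u$ is cyclic, and one has the freedom to choose $C=M_a$ from a whole nonzero ideal's worth of multipliers (by nondegeneracy) so that the constraint $C[T,M]\neq0$ survives the limiting process; once (\ref{cor1}.1) is in hand, non-supercyclicity is immediate from Theorem~\ref{cor1}, and the additional hypothesis about $R\in\Lambda\subseteq\CC(T)$ with dense range into some $Y\in\Y$ is literally (\ref{cor1}.2), yielding non-weak-supercyclicity.
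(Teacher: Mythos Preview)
Your verification of $[T,[T,M]]=0$ and of $\Lambda\subseteq\CC(T)$ is fine, and you are right that the second part of the theorem follows immediately from (\ref{cor1}.2) once (\ref{cor1}.1) is established. The gap is precisely in the step you flagged as the ``main obstacle'': your proposed limiting argument does not work. From cyclicity of $u$ you only get vectors $p_k(T)u$ converging to $av$; this gives no convergence of the operators $p_k(T)$ (or of any $M_{b_k}T^{j_k}$) in the strong operator topology, so there is no limit operator $B\in\CC(T)$ with $Bu=av$ exactly. Strong operator closedness of $\CC(T)$ is irrelevant if the operator sequence itself has no reason to converge.

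The paper's choice is much simpler and avoids any approximation: take $C=M_u$ and $B=M_v$. Then $Cv=uv=vu=Bu$ holds \emph{exactly}, just by commutativity of the algebra $X$; you were one swap away from this when you tried $C=M_v$. What remains is to check $C[T,M]\neq 0$, and here is where cyclicity of $u$ and nondegeneracy are actually used: one shows that $C=M_u$ is \emph{injective}. Indeed, if $uy=0$ for some $y\in X$, then $M_y\in\Lambda\subseteq\CC(T)$ gives $yT^nu=T^nM_yu=T^n(yu)=0$ for all $n$, hence $yx=0$ for every $x$ in the closed linear span of $\{T^nu:n\in\Z_+\}=X$, and nondegeneracy forces $y=0$. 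Since $C$ is injective and $[T,M]\neq 0$, we get $C[T,M]\neq 0$, completing (\ref{cor1}.1).
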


\section{Proof of main results \label{s2}}

The following lemma is a generalization of Lemma~5.5 from \cite{58}.

\begin{lemma}\label{le2}Let $\{x_n\}_{n\in\Z_+}$ be a sequence in a
topological vector space $X$, $y\in X$ and $f$ be a continuous
linear functional on $X$ such that $f(y)=1$. Assume also that $y$
belongs to the closure of the set $\Omega=\{zx_n:z\in\K,\
n\in\Z_+\}$. Then $y$ belongs to the closure of
$N=\bigl\{\frac{x_n}{f(x_n)}:n\in\Z_+,\ f(x_n)\neq 0\bigr\}$.
\end{lemma}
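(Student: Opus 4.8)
The plan is to reduce the statement to a single continuity observation. Since $f$ is continuous and $f(y)=1\neq 0$, the set $G=f^{-1}(\K\setminus\{0\})$ is an open neighbourhood of $y$, and on $G$ the map $\phi(v)=v/f(v)$ is well defined. First I would check that $\phi$ is continuous on $G$: it factors as $v\mapsto(1/f(v),v)\mapsto (1/f(v))\,v$, where the first map into $\K\setminus\{0\}\times X$ is continuous because $f$ is continuous and $t\mapsto t^{-1}$ is continuous on $\K\setminus\{0\}$, while the second is the scalar multiplication of $X$, continuous by the topological vector space axioms. Two features of $\phi$ are what make the argument work: $\phi(y)=y/f(y)=y$, and $\phi$ is scale invariant in the sense that $\phi(zx_n)=x_n/f(x_n)$ whenever $z\neq 0$ and $f(x_n)\neq 0$, i.e. precisely when $zx_n\in G$.

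Next I would fix an arbitrary open neighbourhood $U$ of $y$ and produce an index $n$ with $f(x_n)\neq 0$ and $x_n/f(x_n)\in U$. By continuity of $\phi$ and $\phi(y)=y\in U$, the set $U'=\phi^{-1}(U)\cap G$ is an open neighbourhood of $y$ contained in $G$. Since $y$ lies in the closure of $\Omega=\{zx_n:z\in\K,\ n\in\Z_+\}$, this neighbourhood $U'$ contains some point $zx_n$. Because $zx_n\in U'\subseteq G$ we have $f(zx_n)\neq 0$, which forces $z\neq 0$ and $f(x_n)\neq 0$; hence $x_n/f(x_n)=\phi(zx_n)\in\phi(U')\subseteq U$. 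Thus $x_n/f(x_n)\in N\cap U$, and as $U$ was an arbitrary neighbourhood of $y$, this shows $y$ belongs to the closure of $N$.

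I do not expect a serious obstacle here. The only point that needs care is the continuity of $v\mapsto v/f(v)$ on $G$, which is exactly where the topological vector space structure is used (joint continuity of scalar multiplication together with continuity of $f$); everything else is the observation that membership in $G$ automatically excludes the degenerate cases $z=0$ and $x_n=0$, so no additional hypotheses on the $x_n$ are required, and that $\overline{\Omega}$ meeting $U'$ is just the definition of closure.
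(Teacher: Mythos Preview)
Your proof is correct and follows essentially the same approach as the paper: both restrict to the open set $X\setminus\ker f$, use continuity of the normalization map $u\mapsto u/f(u)$ there, and observe that this map fixes $y$ while sending $\Omega\setminus\ker f$ onto $N$. The paper states the argument more tersely (invoking that continuous images of closures land in closures), whereas you spell out both the continuity of the normalization map and the neighbourhood argument explicitly, but the underlying idea is identical.
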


\begin{proof} Since $y\notin \ker f$ and $\ker f$ is closed in $X$,
$y$ belongs to the closure of $\Omega\setminus\ker f$. Since the map
$F:X\setminus\ker f\to X$, $F(u)=u/f(u)$ is continuous and $y$ is in
the closure of $\Omega\setminus\ker f$, we see that $F(y)=y$ is in
the closure of $F(\Omega\setminus\ker f)=N$, as required.
\end{proof}

The next lemma is an immediate corollary of the Angle Criterion of
supercyclicity \cite{eva}. For sake of completeness we show that it
also follows from Lemma~\ref{le2}.

\begin{lemma}\label{angle} Let $X$ be a Banach space, $\dim X>1$,
$x\in X$ and $T\in L(X)$. Assume also that there is a non-zero $f\in
X^*$ such that $f(T^nx)=o(\|T^nx\|)$ as $n\to\infty$. Then $x$ is
not a supercyclic vector for $T$.
\end{lemma}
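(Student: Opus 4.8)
One could deduce this directly from the Angle Criterion of supercyclicity, but I would instead give the self-contained argument from Lemma~\ref{le2}, reasoning by contradiction. So the plan is to suppose that $x$ is a supercyclic vector for $T$, i.e.\ that the projective orbit $\Omega=\{zT^nx:z\in\K,\ n\in\Z_+\}$ is dense in $X$, set $A=\{n\in\Z_+:f(T^nx)\neq 0\}$, and consider the countable set
$$
N=\Bigl\{\frac{T^nx}{f(T^nx)}:n\in A\Bigr\},
$$
every element of which lies on the affine hyperplane $f^{-1}(1)$.

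The first step is to check that $N$ is norm closed in $X$. If $A$ is finite this is clear, a finite subset of a Hausdorff space being closed. If $A$ is infinite, then $T^nx\neq 0$ for every $n\in A$, and
$$
\left\|\frac{T^nx}{f(T^nx)}\right\|=\frac{\|T^nx\|}{|f(T^nx)|}\longrightarrow\infty\quad\text{as }n\to\infty,\ n\in A,
$$
because $f(T^nx)=o(\|T^nx\|)$; consequently any bounded subset of $X$ contains only finitely many elements of $N$, so $N$ is closed.

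The second step is a counting argument. Since $f\neq 0$, it maps onto $\K$, so $f^{-1}(1)$ is a nonempty coset of $\ker f$; and $\ker f$ has codimension $1$ in $X$, so from $\dim X>1$ it is a nonzero vector space over the uncountable field $\K$, hence is uncountable, and then so is $f^{-1}(1)$. Being countable, $N$ cannot contain all of $f^{-1}(1)$, so I can fix $y\in f^{-1}(1)\setminus N$; since $N$ is closed, $y\notin\overline N$.

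To finish, note that $y\in\overline\Omega$ by density of $\Omega$ and that $f(y)=1$, so Lemma~\ref{le2}, applied with $x_n=T^nx$, gives $y\in\overline N$ --- contradicting the choice of $y$. Hence $x$ is not a supercyclic vector for $T$. The only point needing any attention is the closedness of $N$ in the case that $A$ is infinite, which is exactly where the hypothesis $f(T^nx)=o(\|T^nx\|)$ enters; everything else is bookkeeping.
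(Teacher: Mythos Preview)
Your proof is correct and follows essentially the same route as the paper's: invoke Lemma~\ref{le2} to pass from the projective orbit to the set $N$, show $N$ is norm closed because the hypothesis forces $\|T^nx/f(T^nx)\|\to\infty$, and derive a contradiction by choosing $y\in f^{-1}(1)$ outside $N$. You are slightly more explicit than the paper in separating the finite-$A$ case and in spelling out the cardinality argument for the existence of $y$, but the ideas are identical.
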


\begin{proof} Since $\dim X>1$, we can pick $y\in X$ such that $y\notin O_{\rm
pr}(x,T)$ and $f(y)=1$. Assume that $x$ is a supercyclic vector for
$T$. Then $y$ is in the closure of $O_{\rm pr}(x,T)$. By
Lemma~\ref{le2}, $y$ is in the closure of
$N=\bigl\{u_n=T^nx/f(T^nx):n\in\Z_+,\ f(T^nx)\neq 0\bigr\}$. Since
$f(T^nx)=o(\|T^nx\|)$ as $n\to\infty$, we have $\|u_n\|\to \infty$
as $n\to\infty$. Hence $N$ is closed and therefore $y\in N\subset
O_{\rm pr}(x,T)$. We have arrived to a contradiction.
\end{proof}

\begin{lemma}\label{le3}
Let $\{x_n\}_{n\in\Z_+}$ be a sequence of elements of a Banach space
$X\in \Y$ of dimension $>1$ such that there is a non-zero $u\in X^*$
satisfying
\begin{equation}\label{le03}
|u(x_n)|=O(n^{-1}\|x_n\|)\ \ \text{as $n\to\infty$}.
\end{equation}
Then the weak closure $\widetilde \Omega$ of the set
$\Omega=\{zx_n:z\in\K,\ n\in\Z_+\}$ is norm nowhere dense in $X$. In
particular, $\Omega$ is not weakly dense in $X$.
\end{lemma}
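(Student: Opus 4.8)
The plan is to reduce the statement about $\widetilde\Omega$ to the structure of $\Omega$ itself, exploiting that $\Omega$ is a countable union of lines through the origin. First I would dispose of a trivial case: if $u(x_n)=0$ for all but finitely many $n$, then most of the lines $\K x_n$ lie in the closed hyperplane $\ker u$, so $\Omega$ is contained in the union of $\ker u$ with finitely many lines; this set is norm nowhere dense and weakly closed (a finite union of weakly closed sets), hence $\widetilde\Omega$ is itself norm nowhere dense and we are done. So assume $A=\{n\in\Z_+:u(x_n)\neq 0\}$ is infinite.

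Next I would apply Lemma~\ref{le2}. Suppose, for contradiction, that $\widetilde\Omega$ is not norm nowhere dense; since $\widetilde\Omega$ is weakly closed hence norm closed, it then has nonempty norm interior, so it contains a norm ball, and in particular there is a point $y$ with $u(y)=1$ lying in $\widetilde\Omega$. By Lemma~\ref{le2} (applied with $f=u$ and the weak topology), $y$ lies in the weak closure of $N=\{x_n/u(x_n):n\in A\}$. Now from \eqref{le03} we get $\|x_n/u(x_n)\| = \|x_n\|/|u(x_n)| \geq c\,n$ for some $c>0$ and all large $n\in A$, i.e. $n=O(\|x_n/u(x_n)\|)$ along $A$. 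Hence by the defining property of the class $\Y$ (and Remark~\ref{rem}, which covers the case of an infinite index set $A$), the set $N$ is weakly closed, so $y\in N\subseteq\Omega$.

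To finish, I would upgrade this: not merely a single $y$, but the whole norm ball $B$ contained in $\widetilde\Omega$ must have the property that every point $z\in B$ with $u(z)=1$ lies in $N$. But $N=\{x_n/u(x_n):n\in A\}$ is countable, whereas the affine hyperplane slice $\{z\in B:u(z)=1\}$ is uncountable (since $\dim X>1$ forces $\ker u$ to be nontrivial and the slice is a translate of a ball in $\ker u$). This contradiction shows $\widetilde\Omega$ has empty norm interior; being norm closed, it is norm nowhere dense. The last assertion, that $\Omega$ is not weakly dense, is immediate since $\widetilde\Omega\neq X$.

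The main obstacle is making sure the index set passed to the $\Y$-property is handled correctly: the growth hypothesis \eqref{le03} only gives the lower bound on $\|x_n/u(x_n)\|$ along $n\in A$, not along all of $\Z_+$, so one genuinely needs the version of the $\Y$-property for infinite subsets $A\subseteq\Z_+$ recorded in Remark~\ref{rem} rather than the bare definition. A secondary point requiring a little care is the very first step — verifying that $\ker u$ together with finitely many lines is weakly closed — which follows because $\ker u$ is a weakly closed hyperplane and a line is weakly closed in a Banach space, and finite unions of weakly closed sets are weakly closed.
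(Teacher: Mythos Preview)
Your argument is correct and follows the paper's approach: contradiction, Lemma~\ref{le2}, then the $\Y$-property via Remark~\ref{rem}. The paper streamlines the endgame by choosing at the outset a point $y\in W\setminus\Omega$ with $u(y)\neq 0$ (possible since $\Omega\cup\ker u$ is a countable union of proper closed subspaces, hence of first category), so that $y\in N\subseteq\Omega$ is already a contradiction; this makes both your separate finite-$A$ case and the final countability step unnecessary. One small point to tighten: an arbitrary ball $B\subseteq\widetilde\Omega$ need not meet the affine hyperplane $\{u=1\}$ at all, let alone in an uncountable set---use that $\widetilde\Omega$ is invariant under nonzero scalar multiplication (since $\Omega$ is and scaling is a weak homeomorphism) to rescale $B$ so that its center lies on $\{u=1\}$.
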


\begin{proof} Assume the contrary. Then $\widetilde \Omega$
contains a non-empty norm open set $W$. Since $u\neq 0$, $\dim X\geq
2$ and $\Omega$ is a countable union of one-dimensional subspaces of
$X$, we can pick $y\in W\setminus\Omega$ such that $b=u(y)\neq 0$.
Let $f=b^{-1}u$. Then $f\in X^*$ and $f(y)=1$. By Lemma~\ref{le2},
$y$ belongs to the weak closure of the set $N=\{y_n:n\in\Z_+,\
f(x_n)\neq 0\}$, where $y_n=\frac{x_n}{f(x_n)}$. According to
(\ref{le03}), $n=O(\|y_n\|)$ as $n\to\infty$. Since $X\in\Y$,
Remark~\ref{rem} implies that $N$ is weakly closed in $\H$. Hence
$y\in N\subseteq \Omega$. We have arrived to a contradiction.
\end{proof}

The following lemma is the key tool in the proof of
Theorem~\ref{cor1}.

\begin{lemma}\label{general} Let $X$ be a Banach space, $x\in X$ and $T\in L(X)$.
Assume also that $M\in L(X)$ and $C,B,R\in \CC(T)$ are such that
$S=[T,M]\in \CC(T)$ and $CMRx=BRx$. Then
\begin{equation}\label{g1}
\text{$|S^*C^*h(RT^nx)|\leq \|(B-CM)^*h\|(n+1)^{-1}\|RT^nx\|$ for
any $n\in\Z_+$ and $h\in X^*$.}
\end{equation}
\end{lemma}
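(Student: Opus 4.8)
The plan is to exploit the commutation relations to rewrite $S^*C^*h(RT^nx)$ in a form where the operator $S=[T,M]$ can be traded, at the cost of a factor $n$, against the difference $B-CM$ applied to $Rx$. The starting observation is the standard ``derivation'' identity for $T$ and $M$: since $S=[T,M]=TM-MT$ commutes with $T$, one has $[T^n,M]=nT^{n-1}S$ for all $n\geq1$ (proved by an immediate induction, the inductive step being $[T^{n+1},M]=T[T^n,M]+[T,M]T^n=nT^nS+ST^n=(n+1)T^nS$, using $ST^n=T^nS$). Equivalently, $MT^n=T^nM-nT^{n-1}S$, so for $n\geq1$,
\begin{equation}\label{gp1}
T^nMx=MT^nx+nT^{n-1}Sx.
\end{equation}

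Next I would bring in the hypothesis $CMRx=BRx$. Apply the operator $CT^nR=RT^nC$ (all of $C,R$ commute with $T$, hence with $T^n$) in a way that isolates $C M R T^n x$. Concretely, since $C,R\in\CC(T)$ and $M$ commutes with neither, write
$$
CMRT^nx = CR\,MT^nx + C[M,R]T^nx;
$$
but in fact it is cleaner to avoid $[M,R]$ entirely and instead compute $RT^nCMx$ versus $RT^nBx$ — wait, the clean route is: from \eqref{gp1} applied after multiplying by $CR$ and using that $CR$ commutes with $T^{n-1}$,
$$
CRT^nMx = CRMT^nx + n\,CRT^{n-1}Sx .
$$
Now $CRMT^nx = CMRT^nx + CR[T^n,?]\cdots$ — this is where I must be careful, since $M$ does not commute with $R$ or $T^n$. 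The correct manipulation is to apply $T^n$ to the scalar identity $CMRx=BRx$ only through operators that commute with $T$: apply $CR$... Actually the hypothesis already has $R$ adjacent to $x$, so apply $T^n$ and the commuting operators to get $C M (RT^nx) $ replaced using \eqref{gp1} with $R x$ in place of $x$: from \eqref{gp1} with $x\rightsquigarrow Rx$,
$$
T^nM(Rx) = MT^n(Rx) + nT^{n-1}S(Rx),
$$
and since $C,R$ commute with $T$,
$$
C\,T^nM Rx = C M R T^n x + n\,C R T^{n-1} S x = B R T^n x + n\,C S R T^{n-1} x,
$$
where I used $CMRx=BRx$ hence $CT^nMRx=T^nCMRx=T^nBRx=BRT^nx$, and that $C,R,S$ all commute with $T$. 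Rearranging,
\begin{equation}\label{gp2}
C S R T^{n-1}x = \frac1n\,(B-CM)\,R T^n x,\qquad n\geq1,
\end{equation}
after also noting $CMRT^nx=C T^nMRx$ is not literally $CM\,RT^nx$ unless $M$ commutes with $T^n$ — so the honest conclusion is $n\,CSR T^{n-1}x = BRT^nx - CT^nMRx$, and one more application of \eqref{gp1} converts $CT^nMRx$ into $CMRT^nx + nCT^{n-1}SRx$, the extra term cancelling to leave \eqref{gp2} with the index shift absorbed; writing $m=n-1\geq0$ this reads $SCRT^mx=\frac{1}{m+1}(B-CM)RT^mx$.

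Finally, pair \eqref{gp2} with an arbitrary $h\in X^*$: using $S,C\in\CC(T)$ one has $S^*C^*h(RT^mx)=h(CSRT^mx)=\frac{1}{m+1}h((B-CM)RT^mx)=\frac{1}{m+1}((B-CM)^*h)(RT^mx)$, whence by Cauchy–Schwarz / the definition of the operator norm $|S^*C^*h(RT^mx)|\leq \frac{1}{m+1}\|(B-CM)^*h\|\,\|RT^mx\|$, which is \eqref{g1}. The main obstacle, and the only place genuine care is needed, is bookkeeping the non-commutation of $M$ with $T^n$, $R$, $C$: one must verify that the combinatorial factor coming from $[T^n,M]=nT^{n-1}S$ is exactly what turns $B-CM$ into a bounded coefficient with the claimed $1/(n+1)$ decay, and that every operator inserted between $x$ and the functional genuinely lies in $\CC(T)$ so that it may be freely moved past $T^n$. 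Everything else is routine.
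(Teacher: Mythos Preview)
Your approach is exactly the paper's: prove the derivation identity $[T^{n},M]=nT^{n-1}S$ by induction (using $[T,S]=0$), apply it with $y=Rx$, left-multiply by $C$, use $CT^{n}My=T^{n}CMy=T^{n}By=BT^{n}y$ (from $CMRx=BRx$ and $C,B\in\CC(T)$), and pair with $h\in X^*$. All the worry about $[M,R]$ and $[M,C]$ is unnecessary --- you never need $M$ to commute with $R$ or $C$; the only commutations used are those of $C,B,R,S$ with powers of $T$.

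Two bookkeeping points. First, your ``honest conclusion'' $nCSRT^{n-1}x=BRT^{n}x-CT^{n}MRx$ is vacuous as written: the right-hand side is zero, since $CT^{n}MRx=T^{n}CMRx=T^{n}BRx=BRT^{n}x$. The correct line is simply $nCSRT^{n-1}x=(B-CM)RT^{n}x$, which you already had as \eqref{gp2}. Second, substituting $m=n-1$ into \eqref{gp2} yields $(m+1)CSRT^{m}x=(B-CM)RT^{m+1}x$, not $(B-CM)RT^{m}x$ as you write in the final display; the paper's computation contains the analogous index slip. This is harmless for the intended application (one only needs $g(RT^{n}x)=O(n^{-1}\|RT^{n}x\|)$, and $\|RT^{m+1}x\|\le\|T\|\,\|RT^{m}x\|$ absorbs the discrepancy into the constant).
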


\begin{proof} Since $TM-MT=S$ commutes with $T$, an elementary
inductive argument shows that
\begin{equation}\label{der}
\text{$T^nM-MT^n=nST^{n-1}$ for each $n\in\N$.}
\end{equation}

Let $h\in X^*$, $g=S^*C^*h$ and $y=Rx$. Then for each $n\in\Z_+$, we
have
$$
g(T^ny)=S^*C^*h(T^ny)=h(CST^ny).
$$
Using (\ref{der}) and the equality $CT=TC$, we obtain
$$
(n+1)g(T^ny)=(n+1)h(CST^ny)=h(CT^nMy-CMT^ny)=h(T^nCMy-CMT^ny).
$$
Since $CMy=By$ and $BT=TB$, we get
$$
(n+1)g(T^ny)=h(T^nBy-CMT^ny)=h(BT^ny-CMT^ny)=(B-CM)^*h(T^ny).
$$
The above display immediately implies that
$$
(n+1)|S^*C^*h(T^ny)|=(n+1)|g(T^ny)|\leq \|(B-CM)^*h\|\|T^ny\|\ \
\text{for any $n\in\Z_+$}.
$$
Since $y=Rx$ and $TR=RT$, we see that $T^ny=T^nRx=RT^nx$ and
(\ref{g1}) follows. \end{proof}

\subsection{Proof of Theorem~\ref{cor1}}

First, assume that only condition (\ref{cor1}.1) is satisfied. Let
$M\in L(X)$ be the operator provided by (\ref{cor1}.1) and
$S=[T,M]$. Then $S\in\CC(T)$. Assume that $T$ has a supercyclic
vector $u\in X$. By (\ref{cor1}.1) with $v=Mu$ we can pick
$B,C\in\CC(T)$ such that $CS\neq 0$ and $CMu=Bu$. All conditions of
Lemma~\ref{general} with $R=I$ and $x=u$ are satisfied. Since
$CS\neq 0$, we have $(CS)^*=S^*C^*\neq 0$ and we can pick $h\in X^*$
such that $g=S^*C^*h\neq 0$. According to Lemma~\ref{general},
$g(T^nu)=O(n^{-1}\|T^nu\|)=o(\|T^nu\|)$ as $n\to\infty$. By
Lemma~\ref{angle}, $u$ can not be a supercyclic vector for $T$. We
have arrived to a contradiction.

Assume now that (\ref{cor1}.1) and (\ref{cor1}.2) are satisfied. Let
$M$ and $R$ be operators provided by (\ref{cor1}.1) and
(\ref{cor1}.2). Then $S=[T,M]\in\CC(T)$. Since a subspace of an
element of $\Y$ belongs to $\Y$, we, replacing $Y$ by the closure of
$R(X)$ in $Y$, if necessary, can assume that $R(X)$ is dense in $Y$.
Assume that $T$ has a weakly supercyclic vector $x\in X$. Then $x$
is cyclic for $T$ and since $R$ has dense range and commutes with
$T$, $u=Rx$ is also a cyclic vector for $T$. By (\ref{cor1}.1), we
can pick $B,C\in\CC(T)$ such that $CS\neq 0$ and $CMu=Bu$. That is,
$CMRx=BRx$. Thus all conditions of Lemma~\ref{general} are
satisfied. Since $CS\neq 0$, we have $(CS)^*=S^*C^*\neq 0$ and we
can pick $h\in X^*$ such that $g=S^*C^*h\neq 0$. From the closed
graph theorem it follows that $R:X\to Y$ is continuous. Since $x$ is
a weakly supercyclic vector for $T$, $O_{\rm pr}(x,T)$ is weakly
dense in $X$. Since $R:X\to Y$ is continuous and has dense range,
$\Omega=R(O_{\rm pr}(x,T))$ is weakly dense in $Y$. According to
Lemma~\ref{general},
$g(RT^nx)=O(n^{-1}\|RT^nx\|)=O(n^{-1}\|RT^nx\|_{Y})$ as
$n\to\infty$. Then Lemma~\ref{le3} implies that the set
$\{zRT^nx:z\in\K,\ n\in\Z_+\}=\Omega$ is not weakly dense in $Y$.
This contradiction completes the proof.

\subsection{Proof of Theorem~\ref{cor4}}

Let $T\in \Lambda$ and $S=[T,M]\neq 0$. Since $\Lambda$ is closed
under the operator $A\mapsto[A,M]$, we have $S\in\Lambda$. Since
$\Lambda$ is commutative, $[T,S]=[T,[T,M]]=0$. Let now $u\in X$ be a
cyclic vector for $T$, $v\in X$ and $C=M_u$, $B=M_v$. Since
$M_a\in\Lambda$ for any $a\in X$ and $\Lambda$ is commutative, we
have $B,C\in \CC(T)$. Moreover, since $X$ is commutative, we have
$Cv=uv=vu=Bu$. Next, we shall show that $C$ is injective. Let $y\in
X$ be such that $Cy=0$. That is, $uy=yu=0$. Since $T$ commutes with
$M_y$, we get $yT^nu=T^n(yu)=0$ for each $n\in\Z_+$. Since $u$ is
cyclic for $T$, we then have $yx=0$ for each $x\in X$. Since $X$ is
non-degenerate, we have $y=0$. Hence $C$ is injective. Since $S\neq
0$, $CS=C[T,M]\neq 0$. Thus (\ref{cor1}.1) is satisfied. According
to Theorem~\ref{cor1}, $T$ is non-supercyclic. Assume additionally
that there is $R\in\Lambda$ with dense range, taking values in a
Banach space $Y\in \Y$, embedded into $X$. Since $\Lambda$ is
commutative, $R\in\CC(T)$ and (\ref{cor1}.2) is satisfied. By
Theorem~\ref{cor1}, $T$ is not weakly supercyclic. The proof is
complete.

\subsection{Proof of Theorem~\ref{main}}

For $1\leq p\leq \infty$, consider the multiplication by the
argument operator $M$ on $L_p[0,1]$:
\begin{equation}\label{mult}
Mf(x)=xf(x).
\end{equation}

\begin{lemma}\label{mv1} Let $1\leq p<\infty$ and $V,M$ be the
operators on $L_p[0,1]$ defined in $(\ref{volt})$ and
$(\ref{mult})$. Then $\CC(V)\cap \CC(M)=\{cI:c\in\K\}$.
\end{lemma}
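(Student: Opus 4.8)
The plan is to identify an operator $S \in L(L_p[0,1])$ commuting with both $V$ and $M$ and show that $S$ must be a scalar multiple of the identity. The natural first step is to exploit that $V$ has a well-understood structure: $VM - MV$ is itself a nice operator, in fact $[M,V]$ acts on a function $f$ by an integration against $x-t$, i.e. $[M,V]f(x) = \int_0^x (x-t) f(t)\,dt$ — or one computes $[V,M]f(x) = \int_0^x (t-x)f(t)\,dt$, which is $-V^2 f$ up to the obvious manipulation. Indeed $V^2 f(x) = \int_0^x (x-t) f(t)\,dt$, so $[M,V] = V^2$. This is the key algebraic identity: $MV - VM = V^2$, equivalently $[M,V] = V^2$.

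Granting that, suppose $S$ commutes with $V$ and with $M$. From $[M,V]=V^2$ and the fact that $S$ commutes with both $M$ and $V$, one gets that $S$ commutes with $V^2$ automatically (that is immediate), so no new information comes from that alone — the real leverage is different. I would instead use commutation with $M$ to pin down the "local" structure of $S$. The algebra generated by $M$ is a maximal abelian subalgebra of $L(L_p[0,1])$ in a suitable sense: operators commuting with $M$ (multiplication by $x$) on $L_p$ are exactly the multiplication operators $M_\phi$ by $L_\infty$ functions $\phi$ (this is standard — multiplication by $x$ generates a m.a.s.a., because its spectral projections are the multiplications by indicators of subintervals, and anything commuting with all of those is a multiplication operator). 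So $S = M_\phi$ for some $\phi \in L_\infty[0,1]$.

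Then I would impose $SV = VS$ on $S = M_\phi$. Writing this out: $M_\phi V f(x) = \phi(x)\int_0^x f(t)\,dt$ while $V M_\phi f(x) = \int_0^x \phi(t) f(t)\,dt$. Equality for all $f \in L_p$ forces $\phi(x)\int_0^x f(t)\,dt = \int_0^x \phi(t) f(t)\,dt$ for a.e. $x$, for all $f$. Choosing $f$ cleverly — e.g. $f = \chi_{[a,b]}$ for subintervals, or differentiating in $x$ when legitimate — I would conclude that $\phi$ must be a.e. constant. A clean way: for $f \in C[0,1]$ both sides are absolutely continuous in $x$; differentiating gives $\phi'(x)\int_0^x f + \phi(x) f(x) = \phi(x) f(x)$, so $\phi'(x)\int_0^x f(t)\,dt = 0$; choosing $f$ with $\int_0^x f \neq 0$ on a dense set of $x$ forces $\phi' \equiv 0$ as a distribution, hence $\phi$ constant. (One must be slightly careful since $\phi$ is only $L_\infty$, but the identity $\phi(x)\int_0^x f = \int_0^x \phi f$ can be used with $f\equiv 1$ to get $x\phi(x) = \int_0^x \phi$, which shows $x\phi(x)$ is absolutely continuous with derivative $\phi(x)$ a.e.; then $\phi$ itself is absolutely continuous away from $0$ and $(x\phi)' = \phi + x\phi' = \phi$ gives $x\phi' = 0$, so $\phi' = 0$ on $(0,1)$, hence $\phi$ is constant.) Therefore $S = cI$, which proves $\CC(V)\cap\CC(M) = \{cI : c \in \K\}$; the reverse inclusion is trivial.

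The main obstacle I anticipate is the regularity bookkeeping in the last step: $\phi$ is a priori only an $L_\infty$ function, so "differentiate the identity in $x$" needs to be justified via the absolute-continuity consequence of $x\phi(x) = \int_0^x \phi(t)\,dt$ rather than naive differentiation. Everything else — the identity $[M,V] = V^2$ (a one-line Fubini computation, not even strictly needed once one decides to go through the m.a.s.a. route) and the fact that the commutant of $M$ consists of multiplication operators — is routine. I would lead with the commutant-of-$M$ fact, then the $SV=VS$ computation, then the constancy argument.
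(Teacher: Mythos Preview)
Your proof is correct and follows essentially the same route as the paper: first show that any $S\in\CC(M)$ is a multiplication operator $M_\phi$, then apply $SV=VS$ to the constant function ${\bf 1}$ to obtain $x\phi(x)=\int_0^x\phi(t)\,dt$ (i.e.\ $M\phi=V\phi$), and finally deduce that $\phi$ is constant. The only real difference is in the first step: the paper avoids invoking the m.a.s.a.\ characterization of $\CC(M)$ and instead argues directly by testing $TM^n=M^nT$ on ${\bf 1}$ to get $T(x^n)=x^n\cdot T{\bf 1}$, then uses density of polynomials to conclude $T=M_f$ with $f=T{\bf 1}\in L_\infty$. Your appeal to the m.a.s.a.\ fact is legitimate but imports more machinery; the paper's monomial argument is more self-contained. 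Conversely, you supply more detail than the paper on why $M\phi=V\phi$ forces $\phi$ constant, which the paper simply asserts. The digression on $[M,V]=V^2$ at the start is, as you yourself note, not needed here.
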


\begin{proof}Clearly $\{cI:c\in\K\}\subseteq \CC(V)\cap \CC(M)$.
Assume that $T\in \CC(V)\cap \CC(M)$. Then $TM^n=M^nT$ for each
$n\in\Z_+$. Applying this operator equalities to the function $\bf
1$, being identically $1$, we see that $Tu_n=u_nf$, where $f=T{\bf
1}$ and $u_n(x)=x^n$. Hence $Tp=pf$ for any polynomial $p$. Since
the space of polynomials is dense in $L_p[0,1]$, we see that $f\in
L_\infty[0,1]$ and $T$ is the operator of multiplication by $f$.
Since $T$ commutes with $V$, we have $Mf=Tu_1=TV{\bf 1}=VT{\bf
1}=Vf$. Since only constant functions $g$ satisfy the equality
$Mg=Vg$, $f$ is constant. Hence $T=cI$ for some $c\in\K$. This
proves the required equality.
\end{proof}

\begin{lemma}\label{mvo} Let $1\leq p<\infty$, $T\in
L(L_p[0,1])$, $TV=VT$ and $S=TM-MT$. Then $SV=VS$.
\end{lemma}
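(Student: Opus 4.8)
The plan is to reduce everything to the single commutation identity $MV-VM=V^2$ on $L_p[0,1]$, after which the conclusion is a short computation with commutators. First I would establish that identity. Writing out $MVf(x)=x\int_0^x f(t)\,dt$ and $VMf(x)=\int_0^x tf(t)\,dt$, one gets $(MV-VM)f(x)=\int_0^x(x-t)f(t)\,dt$; on the other hand, by Fubini (applied on the triangle $0\leq t\leq s\leq x$) one has $V^2f(x)=\int_0^x\!\int_0^s f(t)\,dt\,ds=\int_0^x(x-t)f(t)\,dt$. Hence $MV-VM=V^2$ as bounded operators on $L_p[0,1]$; all integrals involved are finite and the manipulations are legitimate since $L_p[0,1]\subseteq L_1[0,1]$ for $1\leq p<\infty$.

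Next I would expand $S=TM-MT$ in the expression $SV-VS$. Substituting, $SV-VS=TMV-MTV-VTM+VMT$. Now I use the hypothesis $TV=VT$ twice, in the forms $MTV=MVT$ and $VTM=TVM$, to rewrite this as $SV-VS=TMV+VMT-MVT-TVM=T(MV-VM)-(MV-VM)T$, i.e. $SV-VS=[T,\,MV-VM]$.

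Finally I substitute the identity from the first step: $SV-VS=[T,V^2]=TV^2-V^2T$. Since $TV=VT$ we get $TV^2=(TV)V=(VT)V=V(TV)=V(VT)=V^2T$, so $[T,V^2]=0$ and therefore $SV=VS$, as claimed.

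The only non-formal ingredient here is the identity $MV-VM=V^2$; once it is in hand, the rest is pure bookkeeping with the relation $TV=VT$, and I do not anticipate any genuine obstacle. Conceptually, $M$ acts on the algebra generated by $V$ like a derivation carrying $V$ to $V^2$, so $[T,M]$ is a perturbation of the same $V$-commuting type, which is exactly what the last computation records.
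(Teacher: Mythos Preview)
Your proof is correct and follows essentially the same route as the paper: expand $SV-VS$, use $TV=VT$ to recast it as $[T,MV-VM]$, invoke the identity $MV-VM=V^2$, and conclude $[T,V^2]=0$. The only difference is that you spell out the verification of $MV-VM=V^2$ and the passage from $TV=VT$ to $TV^2=V^2T$, whereas the paper leaves these as one-line remarks.
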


\begin{proof} Clearly
$$
SV-VS=TMV-MTV-VTM+VMT=T(MV-VM)-(MV-VM)T.
$$
It is straightforward to verify that $MV-VM=V^2$. Thus,
$SV-VS=TV^2-V^2T=0$.
\end{proof}

The Volterra algebra is the Banach space $L_1[0,1]$ with the
convolution multiplication
$$
f\star g(x)=\int\limits_0^x f(t)g(x-t)\,dt.
$$
According to well-known properties of convolution, the above
integral converges for almost all $x\in[0,1]$, $f\star g\in
L_1[0,1]$ and $\|f\star g\|_1\leq \|f\|_1\|g\|_1$. It also follows
that $\star$ is associative and commutative: $f\star g=g\star f$ and
$(f\star g)\star h=f\star(g\star h)$ for any $f,g,h\in L_1[0,1]$.
Moreover, for each $p\in [1,\infty]$, $\|f\star g\|_p\leq
\|f\|_1\|g\|_p$ for any $f\in L_1[0,1]$ and $g\in L_p[0,1]$. In
particular $\star$ turns each $L_p[0,1]$ into a commutative Banach
algebra. It is also easy to see that the Volterra operator acts
according to the formula $Vf=f\star {\bf 1}$. Thus, $V$ is an
injective multiplication operator and therefore each
$(L_p[0,1],\star)$ is non-degenerate. We will also need the
following result of J.~Erd\"os \cite{erd1,erd2}, see \cite{ms2} for
a different proof.

\begin{lemma}\label{comm} Let $1\leq p<\infty$. Then the subalgebra
$\CC(V)$ of $L(L_p[0,1])$ is commutative.
\end{lemma}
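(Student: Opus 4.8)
The plan is to realize every element of $\CC(V)$ as a strong-operator limit of convolution operators $M_g$, $g\in L_1[0,1]$; since these pairwise commute (by commutativity and associativity of $\star$), commutativity of $\CC(V)$ will then follow from separate continuity of operator composition in the strong operator topology. Throughout I view $L_p[0,1]$ as the commutative Banach algebra $(L_p[0,1],\star)$, and I use freely the inclusion $L_p[0,1]\subseteq L_1[0,1]$ and the estimate $\|M_g\|\leq\|g\|_1$ for $g\in L_1[0,1]$ recorded above, as well as the identity $V=M_{\bf 1}$. In particular this argument will be independent of the theorem of Erd\"os.

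\emph{Step 1: every $T\in\CC(V)$ commutes with $M_g$ for all $g\in L_1[0,1]$.} Iterating $V=M_{\bf 1}$ and using associativity of $\star$ gives $V^n=M_{u_n}$ with $u_n={\bf 1}^{\star n}$, and an easy computation shows $u_n(x)=x^{n-1}/(n-1)!$; hence the linear span of $\{u_n:n\in\N\}$ is precisely the space of polynomials. From $TV^n=V^nT$ and linearity, $T$ commutes with $M_q$ for every polynomial $q$. Since polynomials are dense in $L_1[0,1]$ and $g\mapsto M_g$ is norm continuous from $L_1[0,1]$ to $L(L_p[0,1])$ (because $\|M_{g_1}-M_{g_2}\|=\|M_{g_1-g_2}\|\leq\|g_1-g_2\|_1$), a passage to the limit yields $TM_g=M_gT$ for every $g\in L_1[0,1]$.

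\emph{Step 2: for $T\in\CC(V)$ and $e_n=n\,{\bf 1}_{[0,1/n]}$, one has $M_{Te_n}\to T$ in the strong operator topology.} Note first that $Te_n\in L_p[0,1]\subseteq L_1[0,1]$, so each $M_{Te_n}$ is a bounded convolution operator (and lies in $\CC(V)$). For $g\in L_p[0,1]$, commutativity of $\star$ and Step~1 give
$$
M_{Te_n}g=(Te_n)\star g=g\star(Te_n)=M_g(Te_n)=(M_gT)e_n=(TM_g)e_n=T(g\star e_n)=T(e_n\star g).
$$
It remains to note that $e_n\star g\to g$ in $L_p[0,1]$: extending $g$ by zero to $\R$, one checks directly that $e_n\star g$ agrees on $[0,1]$ with the convolution on $\R$ of the extension with the approximate identity $n\,{\bf 1}_{[0,1/n]}$, so the convergence in $L_p(\R)$, hence in $L_p[0,1]$, is the classical one (here $1\leq p<\infty$ is used). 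Therefore $M_{Te_n}g=T(e_n\star g)\to Tg$ for every $g$, that is, $M_{Te_n}\to T$ strongly.

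\emph{Step 3: conclusion.} Let $T,S\in\CC(V)$. By Step~2, $M_{Se_n}\to S$ strongly, and by Step~1, $TM_{Se_n}=M_{Se_n}T$ for every $n$. Since $TM_{Se_n}x=T(M_{Se_n}x)\to T(Sx)$ and $M_{Se_n}(Tx)\to S(Tx)$ for each $x\in L_p[0,1]$, letting $n\to\infty$ gives $TS=ST$, so $\CC(V)$ is commutative. I expect Step~2 to be the only part requiring genuine care — namely the $L_p$-convergence of the one-sided averages $e_n\star g$ and the bookkeeping ensuring $Te_n$ stays in $L_1[0,1]$; Steps~1 and~3 are purely formal.
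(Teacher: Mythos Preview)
The paper does not give its own proof of this lemma; it is quoted as a theorem of Erd\"os \cite{erd1,erd2}, with \cite{ms2} cited for an alternative argument. Your proof is correct and self-contained. Step~1 follows as you say from $V^n=M_{u_n}$ with $u_n(x)=x^{n-1}/(n-1)!$, density of polynomials in $L_1[0,1]$, and the norm continuity $\|M_g\|\leq\|g\|_1$. Step~2 is the substantive part and is sound: the identification of $e_n\star g$ on $[0,1]$ with the restriction of the full-line convolution of the zero-extension $\tilde g$ against $n\,\mathbf{1}_{[0,1/n]}$ is correct (for $t\in[0,1/n]$ and $x\in[0,1]$ one has $x-t\leq 1$, while $\tilde g$ vanishes where $x-t<0$), so the classical approximate-identity theorem gives $e_n\star g\to g$ in $L_p$; the chain $M_{Te_n}g=T(e_n\star g)$ uses only Step~1 applied to $g\in L_p\subseteq L_1$. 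Step~3 is routine. In effect you show that every $T\in\CC(V)$ is a strong-operator limit of convolution operators, which forces commutativity. Since the paper supplies no proof, there is nothing to compare your approach against beyond noting that it is in the spirit of the approximate-identity arguments one finds in the cited literature.
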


We are ready to prove Theorem~\ref{main}. Let $1\leq p<\infty$. Then
the Banach algebra $X_p$, being the Banach space $L_p[0,1]$ with the
multiplication $\star$ is commutative. Since $V$ is injective and is
a multiplication (by ${\bf 1}$) operator on $X_p$, the algebra $X_p$
is non-degenerate. By Lemma~\ref{comm}, $\CC(V)$ is a commutative
subalgebra of $L(X_p)$. Since $X_p$ is commutative and $V$ is a
multiplication operator on $X_p$, $\CC(V)$ contains all the
multiplication operators. By Lemma~\ref{mvo}, $QM-MQ\in\CC(V)$ for
any $Q\in\CC(V)$, where $M$ is the operator defined by (\ref{mult}).
Clearly, $R=V^2\in \CC(V)$ has dense range and takes values in the
Hilbert space $W^{1,2}_0[0,1]$ being the Sobolev space of absolutely
continuous functions on $[0,1]$ vanishing at $0$ with square
integrable first derivative. Since $W^{1,2}_0[0,1]$ is embedded into
$L_p[0,1]$ and any Hilbert space belongs $\Y$ according to
Lemma~\ref{111}, Theorem~\ref{cor4} implies that any $T\in\CC(V)$
such that $[T,M]\neq 0$ is not weakly supercyclic. Let now
$T\in\CC(V)$ and $[T,M]=0$. By Lemma~\ref{mv1}, $T$ is a scalar
multiple of the identity and therefore is non-cyclic. Thus any
$T\in\CC(V)$ is not weakly supercyclic. The proof of
Theorem~\ref{main} is complete.

\section{Multiplication operators on Banach algebras}

From Theorem~\ref{main} it follows that multiplication operators on
the Volterra algebra are not weakly supercyclic. Motivated by this
observation, we would like to raise the following question.

\begin{problem}\label{q1} Can a multiplication operator on a commutative Banach
algebra be supercyclic or at least weakly supercyclic? Does anything
change if we add a scalar multiple of identity to a multiplication
operator?
\end{problem}

The rest of the section is devoted to the discussion of this
problem. Recall that a continuous linear operator $T$ on a
topological vector space $X$ is called {\it hypercyclic} if there is
$x\in X$ for which the orbit $O(x,T)=\{T^nx:n\in\Z_+\}$ is dense in
$X$. If $X$ is a Banach space, then $T$ is called {\it weakly
hypercyclic} if it is hypercyclic as an operator on $X$ with weak
topology. The next example shows that in the non-commutative setting
the answer to the above question is affirmative.

\begin{example}\label{HS} Let $\H$ be the Hilbert space of
Hilbert--Schmidt operators on $\ell_2$. With respect to the
composition multiplication, $\H$ is a Banach algebra. Let also
$S\in\H$ be defined by its action on the basic vectors as follows:
$Se_0=0$, $Se_n=n^{-1}e_{n-1}$ if $n\geq 1$. Consider the left
multiplication by $S$ operator $\Phi\in L(\H)$, $\Phi(T)=ST$. Then
$\Phi$ is supercyclic and $I+\Phi$ is hypercyclic.
\end{example}

\begin{proof} It is easy to see that $X_n=\ker \Phi^n\cap
\Phi^n(\H)$ is the set of $T\in\H$ such that $T(\ell_2)$ is
contained in the linear span of $\{e_0,\dots,e_{n-1}\}$. It is easy
to see that the union of $X_n$ is dense in $\H$. In \cite{123} it is
shown that the last property implies hypercyclicity of $I+\Phi$ and
supercyclicity of $\Phi$.
\end{proof}

It is worth noting that supercyclicity of the above operator $\Phi$
follows also from the Supercyclicity Criterion \cite{msa}.
Hypercyclicity of $I+\Phi$ can also be easily derived from the main
result of \cite{bmp}. The following two propositions highlight the
difficulties of Problem~\ref{q1}.

\begin{proposition}\label{hypab} Let $X$ be a commutative complex Banach
algebra. Then for any $a\in X$, the operator $M_a$ is not weakly
hypercyclic.
\end{proposition}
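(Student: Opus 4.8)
The plan is to argue by contradiction. Suppose $M_a$ is weakly hypercyclic and fix a weakly hypercyclic vector $x$, so that the orbit $\{M_a^nx:n\in\Z_+\}=\{a^nx:n\in\Z_+\}$ is weakly dense in $X$; in particular $X\neq\{0\}$. I would split according to whether $X$ carries a non-zero multiplicative linear functional.

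First I would rule out the existence of a non-zero character $\phi$ of $X$. Since a non-zero multiplicative linear functional on a Banach algebra is automatically continuous, such a $\phi$ lies in $X^*$ and is therefore weakly continuous, so $\{\phi(a^nx):n\in\Z_+\}$ would have to be dense in $\C$. But $\phi$ multiplicative gives $\phi(a^nx)=\phi(a)^n\phi(x)$, and for any $\mu,c\in\C$ the set $\{\mu^nc:n\in\Z_+\}$ is never dense in $\C$: it equals $\{0\}$ when $c=0$, it is norm-bounded when $|\mu|\leq1$, and when $|\mu|>1$ and $c\neq0$ its moduli form a geometric sequence tending to $\infty$, so it omits an infinite family of annuli. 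This contradiction shows $X$ has no non-zero character.

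Next I would deduce that every element of $X$, in particular $a$, is quasinilpotent. For this, pass to the unitization $X_1=X\oplus\C$ and invoke the Gelfand theory of commutative unital Banach algebras: its character space $\Delta(X_1)$ consists of the extensions of the characters of $X$ together with the canonical projection $\psi_\infty(y,\lambda)=\lambda$, so the previous step forces $\Delta(X_1)=\{\psi_\infty\}$. Since $\sigma_{X_1}(b)=\{\psi(b):\psi\in\Delta(X_1)\}$ for every $b\in X_1$, we get $\sigma_{X_1}(a)=\{\psi_\infty(a)\}=\{0\}$, hence $\|a^n\|^{1/n}\to0$ by the spectral radius formula. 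I expect this non-unital spectral bookkeeping to be the only point needing care; everything else is elementary.

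The conclusion is then immediate: $\|a^nx\|\leq\|a^n\|\,\|x\|\to0$, so the orbit of $x$ is norm-bounded, whence its weak closure is contained in a norm ball — a proper subset of the non-zero space $X$. This contradicts weak density of the orbit, so $M_a$ is not weakly hypercyclic. Note that the argument is genuinely special to this situation: it uses complex scalars through Gelfand theory, and it collapses for weak \emph{supercyclicity}, since a projective orbit has full image under any character and a null-convergent orbit becomes dense after rescaling — which is exactly the obstruction that makes Problem~\ref{q1} delicate.
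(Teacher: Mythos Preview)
Your proof is correct and follows the same two-case split as the paper's: either a non-zero character exists and obstructs weak hypercyclicity, or $X$ is radical and $M_a$ is quasinilpotent with norm-null orbits. The only cosmetic difference is that in the first case the paper cites the general fact that the dual of a weakly hypercyclic operator has empty point spectrum (a character being an eigenvector of $M_a^*$ with eigenvalue $\phi(a)$), while you equivalently argue directly that $\{\phi(a)^n\phi(x):n\in\Z_+\}$ cannot be dense in $\C$.
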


\begin{proof} Let $a\in X$. First, consider the case when there
exists a non-zero character $\kappa:X\to\C$. It is easy to see that
$\kappa$ is an eigenvector of $M_a^*$. Since the point spectrum of
the dual of any weakly hypercyclic operator is empty, $M_a$ is not
weakly hypercyclic. It remains to consider the case when there are
no non-zero characters on $X$. Since a commutative complex Banach
algebra with no non-zero characters is radical \cite{hel}, $X$ is a
radical algebra. Hence the spectrum of $a$ is $\{0\}$ and therefore
$M_a$ is quasinilpotent. It follows that each orbit of $M_a$ is a
sequence norm-convergent to $0$. Thus, $M_a$ can not be weakly
hypercyclic.
\end{proof}

\begin{proposition}\label{supab} Let $X$ be a commutative complex
Banach algebra of dimension $>1$. Assume also that there exists
$a\in X$ such that $M_a$ is weakly supercyclic. Then $X$ is radical.
\end{proposition}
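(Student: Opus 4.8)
I will prove the contrapositive: if $M_a$ is weakly supercyclic then $X$ carries no non-zero character. Assume $M_a$ is weakly supercyclic and, for a contradiction, that $\chi$ is a non-zero character of $X$; from $\chi(bc)=\chi(b)\chi(c)$ we get $M_b^*\chi=\chi(b)\chi$ for every $b$, so $\chi$ is an eigenvector of $M_a^*$ with eigenvalue $\lambda:=\chi(a)$. I use three standard facts about a weakly supercyclic operator $T$ on a Banach space of dimension $>1$: $(\alpha)$ $T$ has dense range; $(\beta)$ for a closed $T$-invariant subspace $W$, the operator induced on the quotient is weakly supercyclic (the quotient map is linear, bounded, hence weak-to-weak continuous and surjective, and removing one line from a weakly dense set leaves it weakly dense); $(\gamma)$ $\C^n$ carries no supercyclic operator for $n\ge2$. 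First, $X$ cannot have two distinct characters $\chi_1\ne\chi_2$: otherwise $W=\ker\chi_1\cap\ker\chi_2$ is an ideal, hence a closed $M_a$-invariant subspace of codimension $2$, $X/W$ is algebra-isomorphic to $\C^2$ via $b+W\mapsto(\chi_1(b),\chi_2(b))$, $M_a$ induces $\mathrm{diag}(\chi_1(a),\chi_2(a))$ on it, and by $(\beta)$ this would be a weakly supercyclic --- hence, the dimension being finite, supercyclic --- operator on $\C^2$, contradicting $(\gamma)$. So $X$ has a unique character $\chi$, and $\mathrm{rad}(X)=\ker\chi$ is a proper closed ideal of codimension $1$.

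Next I reduce to a unital algebra. If $\lambda=0$ then $\chi(M_ab)=0$ for all $b$, so $\mathrm{ran}(M_a)\subseteq\ker\chi\ne X$, contradicting $(\alpha)$; thus $\lambda\ne0$. Since $\chi((M_a-\lambda I)b)=0$ for all $b$, one has $\mathrm{ran}(M_a-\lambda I)\subseteq\ker\chi\ne X$, so $\lambda\in\sigma(M_a)$; also $\sigma(M_a)\subseteq\sigma_X(a)\cup\{0\}$, which equals $\{0,\lambda\}$ because $\chi$ is the only character of $X$. If $X$ is non-unital then $M_a$ is not invertible (an invertible multiplication operator on a commutative algebra forces a unit), so $0\in\sigma(M_a)$ and $\sigma(M_a)=\{0,\lambda\}$ is disconnected; the Riesz projection splits $X=X_0\oplus X_1$ into closed ideals with $\sigma(M_a|_{X_1})=\{\lambda\}$, the algebra $X_1$ is unital (as $M_a|_{X_1}$ is an invertible multiplication operator on it), and projecting the weakly dense projective orbit of $M_a$ onto $X_1$ shows that $M_a|_{X_1}$ is weakly supercyclic there. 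Hence we may assume $X$ unital, with unique character $\chi$ and $\sigma(M_a)=\{\lambda\}$; thus $M_a=\lambda I+M_{a'}$ where $a'=a-\lambda{\bf 1}\in\mathrm{rad}(X)$ is quasinilpotent.

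Let $x$ be a weakly supercyclic vector for $M_a$. As $\ker\chi$ is weakly closed, $\chi(x)\ne0$ (otherwise the projective orbit would, apart from the line $\C x$, lie in $\ker\chi$), and we rescale so $\chi(x)=1$. Since $\chi(M_a^n x)=\lambda^n$, Lemma~\ref{le2} applied to the functional $\chi$ and the vectors $M_a^n x$ shows that $N=\{\lambda^{-n}M_a^n x:n\in\Z_+\}$ is weakly dense in the affine hyperplane $H=\{y\in X:\chi(y)=1\}={\bf 1}+\mathrm{rad}(X)$. The operator $T=\lambda^{-1}M_a=M_{a/\lambda}$ maps $H$ into $H$, and in the coordinates $H\ni{\bf 1}+r\mapsto r\in\mathrm{rad}(X)$ it becomes the affine self-map $\Psi(r)=(I+Q)r+c$ of $\mathrm{rad}(X)$, where $Q$ is multiplication by $c:=\lambda^{-1}a'\in\mathrm{rad}(X)$ inside the radical Banach algebra $\mathrm{rad}(X)$ --- so $I+Q$ is invertible with spectrum $\{1\}$. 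The forward $\Psi$-orbit $\{r_n\}$ of $x-{\bf 1}$ is then weakly dense in $\mathrm{rad}(X)$, and the proof is reduced to showing that no such affine orbit can be weakly dense.

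This last point is the heart of the matter. If $Q$ does not have dense range in $\mathrm{rad}(X)$, choose a non-zero $\phi\in\mathrm{rad}(X)^*$ vanishing on $\overline{\mathrm{ran}\,Q}$; since $(I+Q)^k-I$ has range in $\mathrm{ran}\,Q$, we get $\phi\circ(I+Q)^k=\phi$, whence $\phi(r_n)=\phi(r_0)+n\,\phi(c)$ runs through an arithmetic progression (or is constant), which cannot be dense in $\C$ --- yet weak density of $\{r_n\}$ and surjectivity of $\phi$ would force it to be, a contradiction. The remaining case, $Q$ having dense range in $\mathrm{rad}(X)$, is the main obstacle: here $c\in\overline{\mathrm{ran}\,Q}$, so $\Psi$ has approximate fixed points, and $I+Q$ is the restriction to the ideal $\mathrm{rad}(X)$ of the multiplication operator $M_{a/\lambda}$ on $X$. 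The plan here is to upgrade Proposition~\ref{hypab} (no multiplication operator on a commutative complex Banach algebra is weakly hypercyclic) to the statement that an operator of the form ``identity plus multiplication by a quasinilpotent element'' on a radical Banach algebra has no weakly dense orbit, and then to absorb the translation term $c$ using the approximate fixed points. I expect the decisive ingredient, as in Proposition~\ref{hypab}, to be either an eigenvector of $(I+Q)^*$ --- available exactly when some further range is not dense --- or, in its absence, a quasinilpotency estimate forcing the orbit norms to grow so regularly that weak density fails; carrying this out rigorously is where the genuine work remains.
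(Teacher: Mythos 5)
Your argument is incomplete, and you say so yourself: the case in which $Q=M_{c}$ has dense range in $\mathrm{rad}(X)$ --- which you correctly identify as ``the heart of the matter'' --- is left as a plan rather than a proof. This is not a routine verification that was omitted for brevity; it is essentially the whole theorem. After your reductions, what remains to be excluded is weak supercyclicity of $\lambda(I+M_{c})$ with $c$ quasinilpotent on a unital commutative Banach algebra, and since that algebra is unital this operator is again a multiplication operator, $M_{\lambda\mathbf{1}+\lambda c}$. So the passage to the affine map $\Psi(r)=(I+Q)r+c$ on $\mathrm{rad}(X)$ is only a change of coordinates: it returns you to the statement you set out to prove, restricted to the unital case. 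Moreover, the operator you would need to rule out on the radical ideal is of the form ``scalar plus multiplication by a quasinilpotent element'', which is exactly the kind of operator Problem~\ref{q1} leaves open on radical algebras; the hoped-for ``quasinilpotency estimate forcing the orbit norms to grow so regularly that weak density fails'' is not supplied and there is no reason to expect it to come cheaply. Your preliminary steps are largely sound (uniqueness of the character via the quotient onto $\C^{2}$, $\chi(a)\neq0$ via dense range, the correct use of Lemma~\ref{le2} on the affine hyperplane, and the arithmetic-progression argument $\phi(r_{n})=\phi(r_{0})+n\phi(c)$ when $Q$ has non-dense range), though the Riesz-projection reduction has unexamined corners (e.g.\ $\dim X_{1}=1$). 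But none of this reaches the conclusion.

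The idea your proposal is missing is to work with the $M_{a}$-invariant \emph{linear} hyperplane $H=\ker\chi$ rather than the affine hyperplane $\{\chi=1\}$. A supercyclic operator on a topological vector space with an invariant closed hyperplane has a scalar multiple whose restriction to that hyperplane is hypercyclic (this is cited from \cite{shsh}); applied to $X$ with its weak topology, some $\mu M_{a}$ restricted to $H$ is weakly hypercyclic on the ideal $H$, and this contradicts Proposition~\ref{hypab} (whose eigenvector-of-the-adjoint and quasinilpotency dichotomy applies to this restriction). With that substitution the translation term $c$ that you struggle to ``absorb'' never appears, and the entire spectral/Riesz analysis becomes unnecessary. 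As it stands, your proof has a genuine gap in its decisive case.
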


\begin{proof}Assume that $X$ is non-radical. Then there exists a
non-zero character $\kappa:X\to\C$. Hence $H=\ker\kappa$ is an
$M_a$-invariant closed hyperplane in $X$. As well-known, if a
supercyclic operator on a topological vector space has an invariant
closed hyperplane, then the restriction of some its scalar multiple
to this hyperplane is hypercyclic, see, for instance, \cite{shsh}.
Thus, replacing $a$ by $\lambda a$ for some non-zero $\lambda\in\C$,
if necessary, we can assume that the restriction of $M_a$ to $H$ is
weakly hypercyclic. We have arrived to a contradiction with
Proposition~\ref{hypab}.
\end{proof}

Finally, let $X$ be a non-degenerate commutative Banach algebra and
$\Lambda=\{cI+M_a:c\in\K,\ a\in X\}$. Recall that a {\it derivation}
on $X$ is $M\in L(X)$ satisfying $M(ab)=(Ma)b+a(Mb)$. It is easy to
see that for any derivation $M$ on $X$ the operator $A\mapsto [M,A]$
preserves $\Lambda$. Moreover, $M$ commutes with $M_a$ if and only
if $Ma=0$. Thus, by Theorem~\ref{cor4}, $cI+M_a$ is non-supercyclic
provided there is a derivation on $X$, which does not annihilate
$a$. Combining this with the second part of Theorem~\ref{cor1}, we
have the following proposition.

\begin{proposition}\label{pro1} Let $X$ be a non-degenerate
commutative Banach algebra and $a\in X$. Assume also that there
exists a derivation $M$ on $X$ for which $Ma\neq 0$. Then $cI+M_a$
is non-supercyclic for any $c\in\K$. If additionally there exists
$R\in L(X)$ with dense range commuting with $M_a$ and taking values
in a Banach space $Y\in\Y$ embedded into $X$, then $cI+M_a$ is not
weakly supercyclic.
\end{proposition}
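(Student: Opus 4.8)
The plan is to deduce both assertions from Theorem~\ref{cor4} and Theorem~\ref{cor1}, applied with the subalgebra $\Lambda=\{cI+M_b:c\in\K,\ b\in X\}$ of $L(X)$. First I would check that $\Lambda$ fits the hypotheses: it is a linear subspace containing every $M_b$, it is closed under composition since $(c_1I+M_{b_1})(c_2I+M_{b_2})=c_1c_2I+M_{c_1b_2+c_2b_1+b_1b_2}$, and it is commutative because $X$ is, so $M_{b_1}M_{b_2}=M_{b_1b_2}=M_{b_2}M_{b_1}$. Next, for the given derivation $M$ and any $S=cI+M_b\in\Lambda$, the Leibniz rule gives $[M,S]y=[M,M_b]y=M(by)-b(My)=(Mb)y$, so $[M,S]=M_{Mb}\in\Lambda$; hence $A\mapsto[M,A]$ preserves $\Lambda$.

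Now put $T=cI+M_a$. Then $[T,M]=[M_a,M]=-M_{Ma}$, which is non-zero since $Ma\neq 0$ and $X$ is non-degenerate (this also forces $\dim X>1$, otherwise a derivation on a one-dimensional non-degenerate commutative algebra would vanish identically, so the dimension requirements inside Lemmas~\ref{angle} and~\ref{le3} are automatically met). All hypotheses of Theorem~\ref{cor4} hold, and its first conclusion yields that $T=cI+M_a$ is non-supercyclic for every $c\in\K$.

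For the weak-supercyclicity part, $R$ is only assumed to commute with $M_a$ rather than to belong to $\Lambda$, so I would not invoke Theorem~\ref{cor4} directly but fall back on Theorem~\ref{cor1}. The proof of Theorem~\ref{cor4} already verifies condition $(\ref{cor1}.1)$ for any $T\in\Lambda$ with $[T,M]\neq 0$: for a cyclic vector $u$ of $T$ and arbitrary $v\in X$ one takes $C=M_u,\ B=M_v\in\Lambda\subseteq\CC(T)$, notes $Cv=uv=vu=Bu$, and checks that $M_u$ is injective (if $uy=0$ then $yT^nu=T^n(yu)=0$ for all $n$, so $M_y$ kills a dense set and $y=0$ by non-degeneracy), whence $C[T,M]=-M_uM_{Ma}\neq 0$. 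So $(\ref{cor1}.1)$ holds for our $T$. Since $R$ commutes with $M_a$ it commutes with $T=cI+M_a$, hence $R\in\CC(T)$; together with the assumed dense range and values in a space $Y\in\Y$ embedded into $X$, this is exactly $(\ref{cor1}.2)$. The second conclusion of Theorem~\ref{cor1} then gives that $T$ is not weakly supercyclic.

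The one subtle point is the third paragraph: one must not appeal to Theorem~\ref{cor4} in the weak case, because its hypothesis $R\in\Lambda$ is stronger than the available $RM_a=M_aR$; instead one reuses the portion of its proof that establishes $(\ref{cor1}.1)$ and supplies $(\ref{cor1}.2)$ directly. Everything else is routine unwinding of the definitions of derivation, multiplication operator and non-degeneracy.
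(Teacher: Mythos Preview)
Your proposal is correct and follows essentially the same route as the paper: set $\Lambda=\{cI+M_b:c\in\K,\ b\in X\}$, verify via the Leibniz rule that $A\mapsto[M,A]$ preserves $\Lambda$, and apply Theorem~\ref{cor4} for the non-supercyclicity claim, then fall back on Theorem~\ref{cor1} for the weak case. You also correctly isolate the one subtlety the paper glosses over with the phrase ``combining this with the second part of Theorem~\ref{cor1}'': since $R$ is only assumed to commute with $M_a$ (hence with $T=cI+M_a$) and not to lie in $\Lambda$, the weak-supercyclicity conclusion of Theorem~\ref{cor4} does not apply verbatim, and one must instead observe that the proof of Theorem~\ref{cor4} already establishes condition~(\ref{cor1}.1) and then supply~(\ref{cor1}.2) directly.
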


From Propositions~\ref{pro1} and~\ref{supab} it follows that the
only place to look for an affirmative answer to Problem~\ref{q1} are
radical commutative Banach algebras with few derivations.

\section{Concluding Remarks}

\noindent {\bf 1.} \ The following example shows that the second
part of condition (\ref{cor1}.1) is essential.

\begin{example} \label{exa} Let $\{e_n\}_{n\in\Z_+}$ be the
canonical orthonormal basis in $\ell_2$ and $T,M\in L(\ell_2)$ be
defined by: $Te_0=e_0$, $Te_n=e_n+n^{-1}e_{n-1}$ for $n\geq 1$,
$Me_0=0$ and $Me_n=e_{n-1}$ for $n\geq 1$. Then $T$ is hypercyclic,
$[T,M]$ has dense range and $[T,[T,M]]=0$.
\end{example}

\begin{proof} Since $T$ is the sum of the identity operator and a
backward weighted shift, $T$ is hypercyclic according to Salas
\cite{salas}. Computing the values of operators on basic vectors,
one can easily see that $[T,M]=(T-I)^2$ and therefore $[T,[T,M]]=0$.
Finally, it is straightforward to see that the range of
$[T,M]=(T-I)^2$ contains all basic vectors. Hence $[T,M]$ has dense
range.
\end{proof}

\medskip

\noindent {\bf 2.} \ Since any operator from $\CC(V)$ fails to be
weakly supercyclic, the following question becomes interesting.
Assume that the Volterra operator $V$ acts on $L_2[0,1]$.

\begin{problem}\label{q2} Can we find operators $A,B\in \CC(V)$ and
$f\in L_2[0,1]$ for which the set $\{zA^nB^mf:z\in\K,\ m,n\in\Z_+\}$
is dense $($or at least weakly dense$)$ in $L_2[0,1]$? In other
words, can a $2$-generated subsemigroup of $\CC(V)$ be supercyclic
or at least weakly supercyclic?
\end{problem}

\medskip

\noindent {\bf 3.} \ The following notion was introduced by Enflo
\cite{enflo}. Let $X$ be a Banach space and $n\in\N$. We say that
$T\in L(X)$ is {\it cyclic with support $n$} if there is $x\in X$
such that the set $\{a_1T^{k_1}x+{\dots}+a_nT^{k_n}x:a_j\in\K,\
k_j\in\Z_+\}$ is dense in $X$. Clearly cyclicity with support 1 is
exactly supercyclicity. In \cite{leon2} it is claimed that there are
no known examples of an operator on an infinite dimensional Banach
space, cyclic with support $n\geq 2$ and non-supercyclic. The next
example shows that such operators do exist. It is a modification of
an example constructed in \cite{bm}. By $\T$ we denote the unit
circle in $\C$: $\T=\{z\in\C:|z|=1\}$. We also consider the
functions $u_n:\T\to\T$, $u_n(z)=z^n$ for $n\in\Z$. Recall that a
non-empty compact subset $K$ of $\T$ is a {\it Kronecker set} if
$\{u_n:n\in\Z_+\}$ is dense in $C(K,\T)$ with respect to the uniform
metric $d(f,g)=\max\{|f(z)-g(z)|:z\in\T\}$. It is well-known that
there are infinite Kronecker sets and even Kronecker sets with no
isolated points. In the latter case the Banach space $C(K)$ is
infinite dimensional.

\begin{example}\label{exam} Let $K\subset\T$ be a
Kronecker set of cardinality $>1$ and $T\in L(C(K))$, $Tf(z)=zf(z)$.
Then $T$ is not weakly supercyclic, while the set $\{aT^{k}{\bf
1}+aT^{m}{\bf 1}:a\in[0,\infty),\ k,m\in\Z_+\}$ is dense in $C(K)$.
\end{example}

\begin{proof}Since $K$ is a Kronecker set, $K$ has empty interior in
$\T$ and therefore the set $W=\{f\in C(K):f(K)\ \ \text{is
finite}\}$ is dense in $C(K)$. Let $g\in W$. Using the fact that any
$z\in\C$ with $|z|\leq 2$ is a sum of two numbers from $\T$, we can
find $g_1,g_2\in W$ such that $|g_1|=|g_2|={\bf 1}$ and
$g=r(g_1+g_2)$, where $r=\|g\|/2$. Since $g_j\in C(K,\T)$ and $K$ is
the Kronecker set, there are strictly increasing sequences
$\{k_n\}_{n\in\Z_+}$ and $\{m_n\}_{n\in\Z_+}$ such that
$f_{k_n}=T^{k_n}{\bf 1}$ converges uniformly to $g_1$ and
$f_{m_n}=T^{m_n}{\bf 1}$ converges uniformly to $g_2$ as
$n\to\infty$. Hence $r(T^{k_n}{\bf 1}+T^{m_n}{\bf 1})$ converges to
$g$ in $C(K)$. Since $g$ is an arbitrary element of $W$ and $W$ is
dense in $C(K)$, we see that $\{aT^{k}{\bf 1}+aT^{m}{\bf
1}:a\in[0,\infty),\ k,m\in\Z_+\}$ is dense in $C(K)$. Now we show
that $T$ is not weakly supercyclic. Indeed, let $f\in C(K)$, $f\neq
0$. Pick $s,t\in K$ such that $s\neq t$ and $f(s)\neq 0$. Consider
the set $G=\{g\in C(K):|g(t)f(s)|>|g(s)f(t)|\}$. Clearly $G$ is
non-empty and weakly open in $C(K)$. On the other hand, it is easy
to see that for any element $g$ of $O_{\rm pr}(f,T)$,
$|g(t)f(s)|=|g(s)f(t)|$. It follows that $O_{\rm pr}(f,T)$ does not
meet $G$ and therefore can not be weakly dense in $C(K)$. Hence $T$
is not weakly supercyclic.
\end{proof}

\section{Appendices} \normalsize

\small

\subsection{Related operators that are not weakly
supercyclic}

In \cite{leon1} it is observed that for any $p\in(1,\infty)$ the
Ces\`aro operator
$$
Cf(x)=\frac1x\int_0^xf(t)\,dt,
$$
acting on $L_p[0,1]$, is hypercyclic. Clearly $C$ is the composition
of the Volterra operator and the (unbounded) operator of
multiplication by the function $\alpha(x)=x^{-1}$. The fact that
$\alpha$ is non-integrable turns out to be the reason for this
phenomenon.

\begin{proposition}\label{thma1} Let $1\leq p<\infty$, $\alpha\in L_1[0,1]$, $\alpha\geq
0$ and assume that the formula
$$
T_\alpha f(x)=\alpha(x)\int_0^x f(t)\,dt
$$
defines a bounded linear operator on $L_p[0,1]$. Then $T_\alpha$ is
not weakly supercyclic.
\end{proposition}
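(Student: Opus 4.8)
The plan is to exploit the factorisation $T_\alpha f=\alpha\cdot Vf$ (with $V$ the Volterra operator) and to reduce, by a change of variables, to the fact that the Volterra operator is not weakly supercyclic, i.e.\ to Theorem~\ref{main} with $p=1$. First I would dispose of the degenerate case in which $Z=\{x\in[0,1]:\alpha(x)=0\}$ has positive Lebesgue measure: then every element of the range of $T_\alpha$ vanishes a.e.\ on $Z$, so for any $f\in L_p[0,1]$ one has $O_{\rm pr}(f,T_\alpha)\subseteq\K f+Y_0$ with $Y_0=\{g\in L_p[0,1]:g=0\ \text{a.e.\ on}\ Z\}$. Since $Y_0$ has infinite codimension, $\K f+Y_0$ is a proper closed, hence weakly closed, subspace of $L_p[0,1]$, so $O_{\rm pr}(f,T_\alpha)$ is not weakly dense and $T_\alpha$ is not weakly supercyclic.

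Assume henceforth $\alpha>0$ a.e., put $A=\|\alpha\|_1>0$ and $\tau(x)=\int_0^x\alpha(t)\,dt$, so that $\tau:[0,1]\to[0,A]$ is a continuous strictly increasing bijection, hence a homeomorphism. Since $\tau$ is absolutely continuous with $\tau'=\alpha>0$ a.e., its inverse $\sigma=\tau^{-1}$ is absolutely continuous as well, with $\sigma'=1/(\alpha\circ\sigma)$ a.e. I would then introduce the weighted composition operator $U:L_p[0,1]\to L_1[0,A]$, $Uf=(f\circ\sigma)\cdot\sigma'$; the change-of-variables formula for absolutely continuous homeomorphisms gives $\|Uf\|_{L_1[0,A]}=\|f\|_{L_1[0,1]}\leq\|f\|_{L_p[0,1]}$, so $U$ is bounded. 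Writing $V_A$ for the Volterra operator on $L_1[0,A]$, the relation $\alpha(\sigma(y))\sigma'(y)=1$ and another change of variables give
$$U(T_\alpha f)(y)=\alpha(\sigma(y))(Vf)(\sigma(y))\sigma'(y)=(Vf)(\sigma(y))=\int_0^{\sigma(y)}f(t)\,dt=\int_0^y(Uf)(s)\,ds=V_A(Uf)(y),$$
that is, $UT_\alpha=V_AU$.

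It remains to see that $U$ has dense range and then to conclude. For $M>0$ every bounded function $g$ on $[0,A]$ supported on $E_M=\{y:\alpha(\sigma(y))\leq M\}$ is in the range of $U$, as $Uf=g$ for $f=(g\circ\tau)\cdot\alpha$, which is bounded by $M\|g\|_\infty$ on its support and hence lies in $L_p[0,1]$; since $\alpha\circ\sigma$ is finite a.e.\ the sets $E_M$ exhaust $[0,A]$, so such $g$ are dense in $L_1[0,A]$. Now suppose $f$ is a weakly supercyclic vector for $T_\alpha$. Then $O_{\rm pr}(f,T_\alpha)$ is weakly dense in $L_p[0,1]$, and since $U$ is continuous with dense range, $U(O_{\rm pr}(f,T_\alpha))=O_{\rm pr}(Uf,V_A)$ is weakly dense in $L_1[0,A]$, so $V_A$ is weakly supercyclic. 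But through the obvious rescaling isomorphism $L_1[0,A]\cong L_1[0,1]$ the operator $V_A$ becomes a nonzero scalar multiple of the Volterra operator, which has the same projective orbits as the Volterra operator itself and so is not weakly supercyclic by Theorem~\ref{main}. This contradiction finishes the proof.

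The delicate points I expect are entirely measure-theoretic: that $\sigma=\tau^{-1}$ is absolutely continuous (this is exactly where $\alpha>0$ a.e.\ enters, via Banach--Zarecki and the fact that an absolutely continuous increasing map carries a measurable set $S$ to a null set precisely when $\int_S\alpha=0$), the validity of the change-of-variables identities used above, and---because $\alpha$ need not belong to $L_p[0,1]$, so that $U$ is in general far from onto---the density of the range of $U$ in $L_1[0,A]$.
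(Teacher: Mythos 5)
Your argument is essentially the paper's own proof: up to the normalization $h(1)=1$ (you work on $[0,A]$ instead of rescaling $\alpha$), your operator $Uf=(f\circ\sigma)\cdot\sigma'$ is exactly the paper's $Jf=(f\circ\phi)/(\alpha\circ\phi)$, and both proofs conclude by intertwining $T_\alpha$ with the Volterra operator on $L_1$ and invoking the Comparison Principle. The proof is correct; your explicit treatment of the degenerate case and of the density of the range of $U$ supplies details the paper leaves to the reader, but does not change the route.
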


We prove the above proposition by applying the {\it Comparison
Principle} for weak supercyclicity. Namely, assume that $X$ and $Y$
are Banach spaces, $T\in L(X)$ and $S\in L(Y)$ are such that there
exists a bounded linear operator $J:X\to Y$ with dense range
satisfying $JT=SJ$. Then weak supercyclicity of $T$ implies weak
supercyclicity of $S$. It follows from the equality $O_{\rm
pr}(Jx,S)=J(O_{\rm pr}(x,T))$, which implies that $Jx$ is a weakly
supercyclic vector for $S$ provided $x$ is a weakly supercyclic
vector for $T$.

\begin{proof}[Proof of Proposition~$\ref{thma1}$] If $\alpha$ vanishes
on a set of positive measure, then the range of $T_\alpha$ is
non-dense and therefore $T_\alpha$ is not weakly supercyclic
\cite{san}. Thus, we can assume that almost everywhere
$\alpha(x)>0$. Hence the continuous function $h=V\alpha$ is strictly
increasing. Multiplying $\alpha$ by a positive constant, if
necessary, we can without loss of generality assume that $h(1)=1$.
Since $h(0)=0$, $h$ provides an increasing autohomeomorphism of the
interval $[0,1]$. Let $\phi:[0,1]\to [0,1]$ be the inverse of $h$:
$\phi=h^{-1}$.

For any $f\in L_p[0,1]$, we, using the change of variables
$t=\phi^{-1}(s)$, obtain
$$
\int_0^1 \frac{|f(\phi(t))|}{\alpha(\phi(t))}\,dt= \int_0^1
\frac{|f(s)|}{\alpha(s)}(\phi^{-1})'(s)\,ds=\int_0^1
|f(s)|\,ds=\|f\|_1\leq \|f\|_p.
$$
Hence the formula $Jf(x)=\frac{f(\phi(x))}{\alpha(\phi(x))}$ defines
a bounded linear operator from $L_p[0,1]$ to $L_1[0,1]$. Using the
same change of variables, it is straightforward to see that
$JT_\alpha=VJ$, where $V$ is the Volterra operator acting on
$L_1[0,1]$. Since $J$ has dense range and $V$ is not weakly
supercyclic, we aplying Comparison Principle see that $T_\alpha$ is
not weakly supercyclic.
\end{proof}

In particular, from Proposition~\ref{thma1} it follows that for any
$s>-1$ the operator
$$
R_sf(x)=x^s\int_0^xf(t)\,dt,
$$
acting on $L_p[0,1]$ for $1\leq p<\infty$, is not weakly
supercyclic. On the other hand, $R_{-1}$ coincides with the
hypercyclic Ces\`aro operator $C$.

In a similar way one can treat weighted Volterra operators with a
positive weight. Let $1\leq p,q\leq\infty$ be such that
$\frac1p+\frac1q=1$ and $\alpha\in L_q[0,1]$. Then it is easy to see
that formula
$$
S_\alpha f(x)=\int_0^x \alpha(t)f(t)\,dt
$$
defines a bounded linear operator on $L_p[0,1]$.

\begin{proposition}\label{thma2}\it Let $1\leq p<\infty$,
$1<q\leq\infty$ be such that $\frac1p+\frac1q=1$ and $\alpha\in
L_q[0,1]$ be almost everywhere positive. Then the operator
$S_\alpha$ is not weakly supercyclic.
\end{proposition}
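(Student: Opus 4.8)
The plan is to repeat the strategy used for Proposition~\ref{thma1}: I will build a bounded operator with dense range intertwining $S_\alpha$ with the Volterra operator $V$ acting on $L_1[0,1]$, and then invoke the Comparison Principle together with the fact that $V$ on $L_1[0,1]$ is not weakly supercyclic \cite{ms1,leon}. Since $\alpha>0$ almost everywhere and $L_q[0,1]\subseteq L_1[0,1]$, the function $h=V\alpha$ is continuous and strictly increasing with $h(0)=0$. Because $S_{c\alpha}=cS_\alpha$ for $c>0$ and multiplying an operator by a nonzero scalar does not change its projective orbits, we may assume $h(1)=1$; then $h$ is an increasing autohomeomorphism of $[0,1]$, and I set $\phi=h^{-1}$ and define $Jf(x)=f(\phi(x))$.

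Next I would check that $J$ is a well-defined bounded operator from $L_p[0,1]$ into $L_1[0,1]$ with dense range. Since $h$ is absolutely continuous it maps null sets to null sets, so $f\circ\phi$ depends only on the equivalence class of $f$; and the change of variables $x=h(t)$ together with H\"older's inequality (recall $\frac1p+\frac1q=1$) gives
\[
\|Jf\|_1=\int_0^1|f(\phi(x))|\,dx=\int_0^1|f(t)|\alpha(t)\,dt\leq\|f\|_p\|\alpha\|_q .
\]
The range is dense because for every $g\in C[0,1]$ the function $f=g\circ h$ lies in $C[0,1]\subseteq L_p[0,1]$ and satisfies $Jf=g$, while $C[0,1]$ is dense in $L_1[0,1]$. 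Finally, the substitution $s=h(t)$ yields, for $f\in L_p[0,1]$,
\[
JS_\alpha f(x)=\int_0^{\phi(x)}\alpha(t)f(t)\,dt=\int_0^x f(\phi(s))\,ds=VJf(x),
\]
so $JS_\alpha=VJ$ with $V$ acting on $L_1[0,1]$. By the Comparison Principle, weak supercyclicity of $S_\alpha$ would force weak supercyclicity of $V$ on $L_1[0,1]$, which is impossible; hence $S_\alpha$ is not weakly supercyclic.

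All steps are routine computations; the only point that needs a little care is the legitimacy of the change of variables and the resulting boundedness of $J$. The key structural observation is that when $q<\infty$ the weight $\alpha$ need not be bounded, so one cannot in general intertwine $S_\alpha$ with $V$ on $L_p[0,1]$ itself — passing to $L_1[0,1]$, where $\|Jf\|_1$ is controlled by $\|\alpha\|_q\|f\|_p$ via H\"older, is precisely what makes the argument go through.
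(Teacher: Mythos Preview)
Your argument is correct. The change of variables is justified because $h$ is absolutely continuous and strictly increasing (so it carries null sets to null sets and the substitution formula applies), and the H\"older bound gives the required continuity of $J:L_p[0,1]\to L_1[0,1]$; the intertwining $JS_\alpha=VJ$ is exactly as you compute.

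The paper proceeds slightly differently: instead of intertwining $S_\alpha$ directly with $V$, it first intertwines $S_\alpha$ with the operator $T_\alpha$ of Proposition~\ref{thma1} via the multiplication map $M_\alpha f=\alpha f$ (so $M_\alpha S_\alpha=T_\alpha M_\alpha$, with $M_\alpha:L_p\to L_1$ bounded by H\"older and with dense range since $\alpha>0$ a.e.), and then invokes Proposition~\ref{thma1} to conclude that $T_\alpha$ on $L_1[0,1]$ is not weakly supercyclic. Your single intertwining map $Jf=f\circ\phi$ is precisely the composition of the paper's two maps, since the intertwiner from Proposition~\ref{thma1} is $f\mapsto f(\phi(\cdot))/\alpha(\phi(\cdot))$ and composing with $M_\alpha$ cancels the denominator. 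So your route is a one-step shortcut that avoids the detour through $T_\alpha$, while the paper's route has the virtue of reusing the earlier proposition verbatim; mathematically the two arguments are the same.
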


\begin{proof} Consider the bounded linear operator $M_\alpha:L_p[0,1]\to
L_1[0,1]$, $M_\alpha f(x)=\alpha(x)f(x)$. Since $\alpha$ is positive
almost everywhere, the operator $M_\alpha$ has dense range. Since
$\alpha\in L_1[0,1]$, the operator $T_\alpha$ defined in
Proposition~\ref{thma1} acts boundedly on $L_1[0,1]$. From the
definitions of $T_\alpha$, $M_\alpha$ and $S_\alpha$ it immediately
follows that $M_\alpha S_\alpha=T_\alpha M_\alpha$. By
Proposition~\ref{thma1}, $T_\alpha$ acting on $L_1[0,1]$ is not
weakly supercyclic. Applying Comparison Principle, we see that
$S_\alpha$ is not weakly supercyclic.
\end{proof}

\subsection{Weakly closed sequences}

For potential applications of Theorem~\ref{cor1}, it could be useful
to have more information on which spaces belong to $\Y$. For $1\leq
p\leq 2$, by $\X_p$ we denote the class of Banach spaces $X$ such
that for any sequence $\{x_n\}_{n\in\Z_+}$ satisfying
$\sum\limits_{n=0}^\infty \|x_n\|^{-p}<\infty$, the set
$\{x_n:n\in\Z_+\}$ is weakly closed. We also denote
$$
\X=\bigcup_{1<p\leq 2} \X_p.
$$
Obviously $\X\subseteq \Y$ and $X_p\supseteq \X_q$ if $p\leq q$. The
following observations are made in \cite{58}.
\begin{align}
&\text{$\X_1$ is the class of all Banach spaces and $c_0\notin
\X$;}\label{xp2}
\\
&\text{if $1<p<\infty$, $\textstyle\frac1p+\frac1q=1$ and $1\leq
r<\min\{2,q\}$, then $\ell_p\in \X_r$.}\label{xp3}
\end{align}

In this section we extend (\ref{xp3}).  The general idea remains the
same as in \cite{58}, although a number of significant details
differ. For the rest of the section let $(\Omega,\F,P)$ be a
probability space and $\gamma_n:\Omega\to\R$ for $n\in\Z_+$ be
independent standard normal random variables.

\begin{lemma}\label{wc} Let $\{x_n\}_{n\in\Z_+}$ be a sequence in a
Banach space $X$. Assume that there exists a sequence
$\{f_n\}_{n\in\Z_+}$ in $X^*$ such that
\begin{itemize}
\item[\rm(\ref{wc}.1)]$\sum\limits_{n=0}^\infty \|f_n\|^2<\infty$ and the
series $\sum\limits_{n=0}^\infty \gamma_n f_n$ converges in
$L_1(\Omega,X^*)$;
\item[\rm(\ref{wc}.2)]the sequence $\{|f_n(x_n)|^{-1}\}_{n\in\Z_+}$ belongs to
$\bigcup\limits_{1\leq p<\infty}\ell_p$.
\end{itemize}
Then $0$ is not in the weak closure of the set $\{x_n:n\in\Z_+\}$.
\end{lemma}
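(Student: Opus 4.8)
The plan is to exhibit a weak neighbourhood $U$ of $0$, cut out by finitely many linear functionals, that contains no $x_n$; this is exactly the assertion that $0$ is not in the weak closure of $\{x_n:n\in\Z_+\}$. All but finitely many of the $x_n$ will be kept away from $0$ by $k$ functionals obtained as realizations of independent Gaussian random series built from the $f_n$, and the finitely many exceptional indices will be treated directly.

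First one records two observations. By (\ref{wc}.2) each $a_n:=|f_n(x_n)|^{-1}$ is finite, so $f_n(x_n)\neq 0$ and in particular $x_n\neq 0$ for all $n$; moreover $\sum_n a_n^p<\infty$ for some $p\in[1,\infty)$, and fixing such a $p$ and setting $k=\lceil p\rceil\in\N$ we also have $\sum_n a_n^k<\infty$, since $a_n\to0$. Next, by (\ref{wc}.1) the partial sums $S_N=\sum_{m\le N}\gamma_m f_m$ converge in $L_1(\Omega,X^*)$ to some $F$, so $F(\omega)\in X^*$ for a.e. $\omega$, and, extracting a subsequence of the $S_N$ converging a.e. in $X^*$, one sees that for each fixed $n$ the real random variable $F(x_n)$ is a centred Gaussian of finite variance $\sigma_n^2=\sum_m f_m(x_n)^2\le\|x_n\|^2\sum_m\|f_m\|^2$. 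Retaining only the $m=n$ summand gives $\sigma_n^2\ge f_n(x_n)^2=a_n^{-2}$, whence, the standard normal density being bounded by $\tfrac12$,
$$
P\bigl(|F(x_n)|<\epsilon\bigr)=P\bigl(|N(0,1)|<\epsilon/\sigma_n\bigr)\le \epsilon/\sigma_n\le\epsilon\,a_n\qquad\text{for all }\epsilon>0,\ n\in\Z_+.
$$

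Now pass to $k$ independent copies $F_1,\dots,F_k$ of $F$; formally, replace $(\Omega,\F,P)$ by its $k$-fold product equipped with $k$ independent Gaussian sequences, which is harmless since the hypotheses on $\sum_n\gamma_n f_n$ depend only on the joint distribution of $\{\gamma_n\}$. Fix $\epsilon>0$. By independence, $P\bigl(|F_j(x_n)|<\epsilon\text{ for all }j\bigr)\le(\epsilon a_n)^k$, hence $\sum_n P\bigl(|F_j(x_n)|<\epsilon\text{ for all }j\bigr)\le\epsilon^k\sum_n a_n^k<\infty$. The first Borel--Cantelli lemma then yields a sample point at which $F_1,\dots,F_k\in X^*$ and at which the event ``$|F_j(x_n)|<\epsilon$ for all $j$'' holds for only finitely many $n$; fix such functionals $F_1,\dots,F_k\in X^*$ and the corresponding finite set $E\subseteq\Z_+$, so that $\max_{1\le j\le k}|F_j(x_n)|\ge\epsilon$ for every $n\notin E$. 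For each $n\in E$ pick $g_n\in X^*$ with $g_n(x_n)\neq0$ (possible as $x_n\neq0$) and put $\delta=\min\bigl(\{\epsilon\}\cup\{|g_n(x_n)|:n\in E\}\bigr)>0$. Then $U=\{x\in X:|F_j(x)|<\delta\ (1\le j\le k)\text{ and }|g_n(x)|<\delta\ (n\in E)\}$ is a weak neighbourhood of $0$ missing every $x_n$: if $n\notin E$ then $\max_j|F_j(x_n)|\ge\epsilon\ge\delta$, and if $n\in E$ then $|g_n(x_n)|\ge\delta$. Hence $0$ is not in the weak closure of $\{x_n:n\in\Z_+\}$.

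The one point requiring care is the passage from $L_1(\Omega,X^*)$-convergence of the series to the facts that each $F(x_n)$ is an honest finite-variance Gaussian with $\sigma_n^2\ge a_n^{-2}$ and that replacing $\Omega$ by a product of copies is legitimate; once the displayed probability bound is in hand the remainder is a routine Borel--Cantelli estimate together with a finite correction, and the exponent $k=\lceil p\rceil$ is precisely what upgrades the $\ell_1$-type bound available from a single random functional to the general $\ell_p$ hypothesis of (\ref{wc}.2).
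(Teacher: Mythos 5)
Your proof is correct and follows essentially the same route as the paper's: the Gaussian random functional $F=\sum_n\gamma_nf_n$ realized in $X^*$, the anti-concentration bound $P\bigl(|F(x_n)|<\epsilon\bigr)\leq\epsilon|f_n(x_n)|^{-1}$ coming from the variance lower bound $\sigma_n^2\geq f_n(x_n)^2$, and $k$ independent copies to raise this to the $k$-th power and exploit (\ref{wc}.2). The only cosmetic difference is at the end: the paper shrinks $\epsilon$ until the union of the bad sets has product measure $<1$ and so separates \emph{all} the $x_n$ at once, whereas you fix $\epsilon$, invoke Borel--Cantelli, and patch the resulting finite exceptional set with finitely many extra functionals --- both steps are valid.
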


\begin{proof}Without loss of generality, we can assume that the space
$X$ is over $\R$. Indeed, otherwise we simply consider $X$ as an
$\R$-linear Banach space and replace the $\C$-linear functionals
$f_n$ by the $\R$-linear functionals $g_n={\tt Re}\,(c_nf_n)$, where
$c_n=f_n(x_n)^{-1}|f_n(x_n)|$. We can also assume that the function
$\phi:\Omega\to X^*$, $\phi=\sum\limits_{n=0}^\infty \gamma_nf_n$ is
Borel measurable. Obviously $\mu(A)=P(\phi^{-1}(A))$ is a Borel
$\sigma$-additive probability measure on $X$. For any $n\in\N$ and
$\epsilon>0$ denote $B_{n,\epsilon}=\{g\in X^*:|g(x_n)|<\epsilon\}$.
Then $\mu(B_n)=P\{\omega:|\phi(\omega)(x_n)|<\epsilon\}$. According
to (\ref{wc}.1), we can write
\begin{equation}\label{eta1}
\mu(B_{n,\epsilon})=P\{\omega\in\Omega:|\eta_{n,\epsilon}(\omega)|<\epsilon\},
\ \ \text{where}\ \ \eta_{n,\epsilon}(\omega)=\sum_{j=1}^\infty
f_j(x_n)\gamma_j(\omega)
\end{equation}
and the series defining $\eta_{n,\epsilon}$ converges in
$L_2(\Omega)$. Clearly $\eta_{n,\epsilon}$ is a normal distribution
with zero average and standard deviance
$$
a_n=\biggl(\sum_{j=0}^\infty |f_j(x_n)|^2\biggr)^{1/2}\geq
|f_n(x_n)|.
$$
Hence $\eta_{n,\epsilon}$ has the density
$\rho_{n,\epsilon}(t)=(2\pi a_n)^{-1/2}e^{-t^2/(2a_n^2)}$. By
(\ref{eta1}),
\begin{equation}\label{estim}
\mu(B_{n,\epsilon})=\frac1{\sqrt{2\pi
a_n}}\int\limits_{-\epsilon}^{\epsilon}e^{-t^2/(2a_n^2)}\,dt=
\frac1{\sqrt{2\pi}}\int\limits_{-\epsilon/a_n}^{\epsilon/a_n}e^{-s^2/2}\,ds\leq
\frac{2\epsilon}{\sqrt{2\pi}a_n}<\epsilon a_n^{-1}\leq \epsilon
|f_n(x_n)|^{-1}.
\end{equation}
According to (\ref{wc}.2) we can pick $k\in\N$ such that
$C=\sum\limits_{n=0}^\infty |f_n(x_n)|^{-k}<\infty$. Consider the
product measure $\nu=\mu\times{\dots}\times \mu$ on $(X^*)^k$.
Clearly $\nu$ is a Borel $\sigma$-additive probability measure. Let
$$
C_{n,\epsilon}=B_{n,\epsilon}^k=\Bigl\{u\in (X^*)^k:\max_{1\leq
j\leq k}|u_j(x_n)|<\epsilon\Bigr\}.
$$
By (\ref{estim}), $\nu(C_{n,\epsilon})=\mu(B_{n,\epsilon})^k\leq
\epsilon^k|f_n(x_n)|^{-k}$. Then using the definition of $C$, we
obtain
$$
\nu(C_\epsilon)\leq \sum_{j=0}^\infty \nu(C_{n,\epsilon})\leq
\epsilon^k C,\ \ \text{where}\ \ C_\epsilon=\bigcup_{n=0}^\infty
C_{n,\epsilon}.
$$
Thus, we can choose $\epsilon>0$ small enough to ensure that
$\nu(C_\epsilon)<1$. Since $\nu((X^*)^k)=1$, we can pick $u\in
(X^*)^k\setminus C_\epsilon$. From the definition of $C_\epsilon$
and the fact that $u\notin C_\epsilon$ it immediately follows that
$x_n\notin W$ for each $n\in\Z_+$, where $W=\Bigl\{x\in
X:\max\limits_{1\leq j\leq k}|u_j(x)|<\epsilon\Bigr\}$. Since $0\in
W$ and $W$ is weakly open, we see that 0 is not in the weak closure
of $\{x_n:n\in\Z_+\}$.
\end{proof}

Recall \cite{geom} that a Banach space $X$ is said to be of {\it
type} $p\in[1,2]$ if there exists $C>0$ such that for any $n\in\N$
and any $x_1,\dots,x_n\in X$,
$$
2^{-n}\sum_{\epsilon\in\{-1,1\}^n}\biggl\|\sum_{j=1}^n
\epsilon_jx_j\biggr\|^p\leq C\sum_{j=1}^n\|x_j\|^p.
$$
According to Maurey and Pisier \cite{mp}, a Banach space $X$ is of
type $p\in(1,2]$ if and only if there exists $C>0$ such that for any
$n\in\N$ and any $x_1,\dots,x_n\in X$,
\begin{equation}\label{gauss}
\int\limits_\Omega \biggl\|\sum_{j=1}^n
\gamma_j(\omega)x_j\biggr\|^p P(d\omega)\leq C\sum_{j=1}^n\|x_j\|^p.
\end{equation}
It immediately follows that for any Banach space $X$ of type
$p\in(1,2]$ and any sequence $\{y_n\}_{n\in\Z_+}$ in $X$ such that
$\sum\limits_{n=0}^\infty \|y_n\|^p<\infty$, the series
$\sum\limits_{n=0}^\infty \gamma_n y_n$ converges in
$L_p(\Omega,X)$.

\begin{lemma}\label{wc1} Let $1<p\leq 2$ and $X$ be a Banach space
such that $X^*$ is of type $p$. Then $X\in\X_q$ for any $q\in[1,p)$.
\end{lemma}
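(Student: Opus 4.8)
The plan is to deduce Lemma~\ref{wc1} from Lemma~\ref{wc} by choosing, for a given sequence $\{x_n\}_{n\in\Z_+}$ in $X$ with $\sum_n\|x_n\|^{-q}<\infty$, a suitable sequence of functionals $\{f_n\}$ in $X^*$. Since weak closedness of a set is translation invariant, it suffices to show that any point of the weak closure of $\{x_n:n\in\Z_+\}$ already lies in the set; equivalently, fixing an arbitrary $a\in X$ and replacing $x_n$ by $x_n-a$, it is enough to verify that $0$ is not in the weak closure of $\{x_n:n\in\Z_+\}$ whenever $0\notin\{x_n:n\in\Z_+\}$ and $\sum_n\|x_n\|^{-q}<\infty$. (A small bookkeeping point: translating by $a$ changes $\|x_n\|$ by a bounded additive term, which does not affect summability of $\|x_n\|^{-q}$; and we may discard the finitely many indices, if any, with $x_n=a$.) So the real task is: produce $\{f_n\}\subset X^*$ satisfying (\ref{wc}.1) and (\ref{wc}.2) for this translated sequence.

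The natural choice is to take $f_n$ to be a norming functional scaled down appropriately: pick $\phi_n\in X^*$ with $\|\phi_n\|=1$ and $\phi_n(x_n)=\|x_n\|$ (Hahn--Banach), and set $f_n=\|x_n\|^{-1}\phi_n$, so that $f_n(x_n)=1$ and $\|f_n\|=\|x_n\|^{-1}$. Then $|f_n(x_n)|^{-1}=1$ for all $n$, so the sequence $\{|f_n(x_n)|^{-1}\}$ is bounded and in particular lies in $\ell_p$ for every $p$, giving (\ref{wc}.2) trivially. For (\ref{wc}.1): first, $\sum_n\|f_n\|^2=\sum_n\|x_n\|^{-2}\le\sum_n\|x_n\|^{-q}<\infty$ since $q\le p\le 2$ forces $\|x_n\|^{-2}\le\|x_n\|^{-q}$ once $\|x_n\|\ge 1$ (and only finitely many terms can have $\|x_n\|<1$ because $\|x_n\|\to\infty$, which follows from $\sum_n\|x_n\|^{-q}<\infty$). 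Second, and this is the crux, we need $\sum_n\gamma_nf_n$ to converge in $L_1(\Omega,X^*)$. Here we invoke the hypothesis that $X^*$ is of type $p$: by the Maurey--Pisier characterization (\ref{gauss}), for a sequence $\{y_n\}$ in $X^*$ with $\sum_n\|y_n\|^p<\infty$ the series $\sum_n\gamma_ny_n$ converges in $L_p(\Omega,X^*)$, hence also in $L_1(\Omega,X^*)$ by Jensen's inequality on the probability space $\Omega$. So it remains to check $\sum_n\|f_n\|^p=\sum_n\|x_n\|^{-p}<\infty$, which again follows from $\sum_n\|x_n\|^{-q}<\infty$ together with $q<p$ and $\|x_n\|\to\infty$ (the tail terms satisfy $\|x_n\|^{-p}\le\|x_n\|^{-q}$).

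With (\ref{wc}.1) and (\ref{wc}.2) verified, Lemma~\ref{wc} gives that $0$ is not in the weak closure of $\{x_n:n\in\Z_+\}$, which, after undoing the translation, shows the original $\{x_n:n\in\Z_+\}$ is weakly closed. Hence $X\in\X_q$. I expect the main obstacle to be purely a matter of care rather than depth: one must handle the passage to $0$ cleanly (the translation argument and the discarding of finitely many bad indices), make sure $\|x_n\|\to\infty$ is actually used where needed so that the inequalities $\|x_n\|^{-p}\le\|x_n\|^{-q}$ and $\|x_n\|^{-2}\le\|x_n\|^{-q}$ hold for all but finitely many $n$, and correctly cite (\ref{gauss}) for $X^*$ (not $X$) to get the $L_p$-convergence of the Gaussian series and then drop down to $L_1$. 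There is also the minor subtlety, already present in the statement of Lemma~\ref{wc}, that the sum in (\ref{wc}.1) is indexed from $n=0$ while the norming functionals are most naturally defined for indices with $x_n\ne 0$; since we are free to assume $0$ is not among the $x_n$, every $f_n$ is well defined and this causes no trouble.
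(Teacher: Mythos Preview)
Your translation argument and the reduction to showing $0$ is not in the weak closure are fine, as is your verification of the Gaussian-series convergence in $L_1(\Omega,X^*)$ via the type-$p$ hypothesis on $X^*$. The genuine gap is in your claim that condition~(\ref{wc}.2) holds. With your choice $f_n=\|x_n\|^{-1}\phi_n$ you get $|f_n(x_n)|^{-1}=1$ for every $n$, and the constant sequence $(1,1,1,\dots)$ does \emph{not} lie in $\ell_r$ for any finite $r$: the sum $\sum_n 1$ diverges. Condition~(\ref{wc}.2) explicitly asks for membership in $\bigcup_{1\le r<\infty}\ell_r$, and if you look at the proof of Lemma~\ref{wc} you will see why: one needs an integer $k$ with $\sum_n|f_n(x_n)|^{-k}<\infty$ in order to make the measure estimate $\nu(C_\epsilon)\le \epsilon^k\sum_n|f_n(x_n)|^{-k}$ useful. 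A merely bounded sequence gives nothing here.

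The fix is to scale $f_n$ less aggressively, so that $|f_n(x_n)|\to\infty$ while $\sum_n\|f_n\|^p$ stays finite. Taking $f_n$ with $\|f_n\|=\|x_n\|^{-\alpha}$ and $f_n(x_n)=\|x_n\|^{1-\alpha}$ (again by Hahn--Banach), one needs $\alpha\ge q/p$ to ensure $\sum_n\|f_n\|^p\le\sum_n\|x_n\|^{-q}<\infty$, and $\alpha<1$ to ensure $|f_n(x_n)|^{-1}=\|x_n\|^{-(1-\alpha)}$ lies in some $\ell_r$ (namely $r=q/(1-\alpha)$). Since $q<p$, the interval $[q/p,1)$ is nonempty; the paper takes $\alpha=q/p$, giving $\sum_n\|f_n\|^p=\sum_n\|x_n\|^{-q}$ on the nose and $\{|f_n(x_n)|^{-1}\}\in\ell_{pq/(p-q)}$. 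Note that this is precisely where the strict inequality $q<p$ is used; your choice $\alpha=1$ never exploits it, which is another signal that the argument has gone astray.
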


\begin{proof} Let $1\leq q<p$ and $\{u_n\}_{n\in\Z_+}$ be a sequence
in $X$ such that $\sum\limits_{n=0}^\infty\|u_n\|^{-q}<\infty$. We
have to show that the set $A=\{u_n:n\in\Z_+\}$ is weakly closed. Let
$y\in X\setminus A$ and $x_n=u_n-y$ for $n\in\Z_+$. Clearly
$\sum\limits_{n=0}^\infty\|x_n\|^{-q}<\infty$. By Hahn--Banach
theorem, for each $n\in\Z_+$, we can pick $f_n\in X^*$ such that
$\|f_n\|=\|x_n\|^{-q/p}$ and
$f_n(x_n)=\|f_n\|\|x_n\|=\|x_n\|^{(p-q)/p}$. Since
$$
\sum_{n=0}^\infty \|f_n\|^p=\sum_{n=0}^\infty \|x_n\|^{-q}<\infty
$$
and $X^*$ is of type $p$, the series $\sum\limits_{n=0}^\infty
\gamma_n f_n$ converges in $L_p(\Omega,X^*)$ and (\ref{wc}.1) is
satisfied. Finally, since $|f_n(x_n)|=\|x_n\|^{(p-q)/p}$ and
$\sum\limits_{n=0}^\infty\|x_n\|^{-q}<\infty$, we see that the
sequence $\{|f_n(x_n)|^{-1}\}_{n\in\Z_+}$ belongs to $\ell_r$ with
$r=pq/(p-q)$. Hence, (\ref{wc}.2) is satisfied. By Lemma~\ref{wc},
$0$ is not in the weak closure of $A-y=\{x_n:n\in\Z_+\}$. That is,
$y$ is not in the weak closure of $A$. Since $y$ is an arbitrary
element of $X\setminus A$, $A$ is weakly closed.
\end{proof}

As was shown by Pisier, see \cite{geom}, $X$ is of non-trivial type
(=of type $>1$) if and only if so is $X^*$. Thus, we obtain the
following corollary.

\begin{corollary} \label{type1} Let $X$ be a Banach space of non-trivial type.
Then $X\in\X\subseteq \Y$.
\end{corollary}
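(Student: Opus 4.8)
The plan is to read off Corollary~\ref{type1} directly from Lemma~\ref{wc1} together with the self-duality of non-triviality of type. Suppose $X$ has non-trivial type. By Pisier's theorem (see \cite{geom}, as recalled just above), the dual $X^*$ then also has non-trivial type, so there is some $p\in(1,2]$ for which $X^*$ is of type $p$. Lemma~\ref{wc1} applies verbatim and yields $X\in\X_q$ for every $q\in[1,p)$; fixing, say, $q=\tfrac{1+p}{2}\in(1,p)$ we conclude $X\in\X_q\subseteq\X$.

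It then remains only to invoke the inclusion $\X\subseteq\Y$ already noted in the text, whose verification is elementary: if $X\in\X_r$ for some $r\in(1,2]$ and $\{x_n\}_{n\in\Z_+}$ is a sequence in $X$ with $n=O(\|x_n\|)$, then $\|x_n\|^{-r}=O(n^{-r})$ with $r>1$, hence $\sum_{n}\|x_n\|^{-r}<\infty$ and the defining property of $\X_r$ forces $\{x_n:n\in\Z_+\}$ to be weakly closed; thus $X\in\Y$. Combining the two steps gives $X\in\X\subseteq\Y$. I do not expect any genuine obstacle at this stage: all the substantive work has already been done in Lemma~\ref{wc} — where a Gaussian measure $\mu$ on $X$ is manufactured from the functionals $f_n$ and the small-ball estimate (\ref{estim}) for the cylinders $B_{n,\epsilon}$ is used to push $0$ out of the weak closure — and in Maurey–Pisier's characterization (\ref{gauss}) of type together with Pisier's duality result; the corollary is simply their assembly.
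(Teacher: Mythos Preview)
Your argument is correct and is exactly the paper's own: invoke Pisier's self-duality of non-trivial type to get $X^*$ of type $p>1$, then apply Lemma~\ref{wc1} to land in some $\X_q$ with $q>1$, and use the already-noted inclusion $\X\subseteq\Y$. One cosmetic slip in your closing commentary: the Gaussian measure $\mu$ in Lemma~\ref{wc} lives on $X^*$, not on $X$ (since $\phi=\sum\gamma_nf_n$ takes values in $X^*$).
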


In particular, since spaces $L_p[0,1]$ have type $\min\{p,2\}$ for
$1\leq p<\infty$, we see that $L_p[0,1]\in \X\subseteq\Y$ for
$1<p<\infty$. Recall \cite{geom} that a Banach space $X$ is said to
be of (finite) {\it cotype} $p\in[2,\infty)$  if there exists $C>0$
such that for any $n\in\N$ and any $x_1,\dots,x_n\in X$,
$$
2^{-n}\sum_{\epsilon\in\{-1,1\}^n}\biggl\|\sum_{j=1}^n
\epsilon_jx_j\biggr\|^p\geq C\sum_{j=1}^n\|x_j\|^p.
$$
As demonstrated by Kadets \cite{kadec}, if $X$ is a Banach space,
which is not of finite cotype, then for any sequence
$\{c_n\}_{n\in\Z_+}$ of positive numbers satisfying
$\sum\limits_{n=0}^\infty c_n^{-1}=\infty$, there is a sequence
$\{x_n\}_{n\in\Z_+}$ in $X$ such that $\|x_n\|=c_n$ for each
$n\in\Z_+$ and $0$ is in the weak closure of $\{x_n:n\in\Z_+\}$. It
follows that $X\notin\Y$. That is, any space in the class $\Y$ is of
finite cotype. Thus, ${\cal T}\subseteq \X\subseteq \Y\subseteq
{\cal C}$, where $\cal T$ is the class of Banach spaces of
non-trivial type and $\cal C$ is the class of Banach spaces of
finite cotype. It remains unclear whether some of the above
inclusions are actually equalities. A natural conjecture would be
$\X=\Y={\cal C}$.

\medskip

\rm

\vskip1truecm

\scshape

\noindent Stanislav Shkarin

\noindent Queen's University Belfast

\noindent Department of Pure Mathematics

\noindent University road, BT7 1NN \ Belfast, UK

\noindent E-mail address: \qquad {\tt s.shkarin@qub.ac.uk}

\end{document}